\newcommand{\figcaption}[1]{\def\@captype{figure}\caption{#1}}
\newcommand{\tblcaption}[1]{\def\@captype{table}\caption{#1}}
\title[]{The web of reflexive polygons is connected}
\author[M.~Miura]{Makoto Miura}
\address{Research Institute for Mathematical Sciences, Kyoto University, Kyoto 606-8502, Japan}
\email{miurror.jp@gmail.com}
\begin{document}
\begin{abstract}
  We discuss the problem on the connectedness of various webs of lattice polytopes by 
  introducing a geometric point of view from the toric Mori theory.
  To this end,
  we provide a combinatorial description of toric Sarkisov links 
  in terms of certain sets of lattice points, which we call primitive generating sets.
  In two dimensions, the description is further translated into the language of lattice polygons.
  As an application, we prove in two ways (constructive and non-constructive) that reflexive 
  or terminal polygons form a single connected web via inclusion relations
  even without taking modulo unimodular equivalences.
\end{abstract}
\maketitle
\section{Introduction}
As a byproduct of their famous works on the classification of reflexive polytopes,
Kreuzer and Skarke have shown that
the web of $d$-dimensional reflexive polytopes is connected modulo unimodular equivalences if $d\le 4$
\cite{MR1901220}.
This is important in relation to the unsolved mathematical problem, so-called \emph{Reid's fantasy} \cite{MR909231},
which asks whether the web of smooth Calabi--Yau 3-folds is connected via geometric transitions.
In fact,
their result
implies that the web of 
smooth Calabi--Yau 3-folds
described as anticanonical hypersurfaces
in toric varieties (in the sense of Batyrev \cite{MR1269718} and 
Fredrickson \cite{fredrickson2015generalized})
is connected via geometric transitions and flops. 
Since their argument is purely combinatorial and relies on the results of computer-aided classification
of reflexive polytopes,
it is still an interesting problem to analyze
the web of reflexive polytopes from a more geometric point of view.

The present paper provides an argument based on the perspectives from the Mori theory in birational geometry. 
Let $Z$ be a smooth or mildly singular projective variety over the field of complex numbers $\bC$.
By running the \emph{minimal model program} (MMP, for short), 
one conjecturally obtains a sequence of elementary birational maps directed 
by the canonical divisor $K_Z$,
\begin{equation}
  \label{eq:mmp}
   \varphi: Z=X_0 \dashrightarrow X_1 \dashrightarrow \cdots \dashrightarrow X_m =X,
\end{equation}
where
$X$ is either a minimal model or a Mori fiber space $p: X\rightarrow S$.
Although the output $X$ is not unique, 
it is known that
different birational minimal models are related by a sequence of flops \cite{MR2426353}
and different birational Mori fiber spaces 
are related by a sequence
of Sarkisov links \cite{MR1311348}, \cite{MR3019454}.
Thus, in either case, one obtains a single web connecting all birational models via
elementary birational maps if the MMP works.
For projective toric varieties, 
the Mori theory has been established by the seminal paper of Reid
\cite{MR717617} and developed 
through a number of studies; for example,
\cite[Chapter 14]{MR1875410}, \cite{MR2097403}, \cite[Chapters 14--15]{MR2810322}.
By running MMP starting from a $\bQ$-factorial projective toric variety $Z$,
one always ends up with a toric Mori fiber space (we do not assume that it has only terminal singularities).
The resulting web of birational models 
can be coarse-grained into the web of Fano polytopes 
via inclusion relations.
Here a \emph{Fano polytope} (also known as a \emph{$\bQ$-Fano polytope}) is a lattice polytope containing the
origin as an interior point whose vertices are all primitive lattice points. 
This coarse-graining is done
by taking the convex hull $\Conv G(\Sigma)$ of 
the set of primitive ray generators $G(\Sigma)$ of each projective fan $\Sigma$.
For an elementary birational map $X_{\Sigma} \dashrightarrow X_{\Sigma'}$ in \pref{eq:mmp},
we have inclusions  $G(\Sigma)\supset G(\Sigma')$ and $\Conv G(\Sigma)\supset \Conv G(\Sigma')$.

The web of Fano polytopes is connected
since there is a sequence 
$\nabla \subset \Conv(\nabla\cup \nabla') \supset \nabla'$ for any pair of Fano polytopes $\nabla$ and $\nabla'$.
Hence, a natural problem is whether 
the web of a certain restricted class of Fano polytopes is connected.
If the answer to the following \pref{pb:connectedness} is yes,
we say that the class of lattice polytopes is \emph{globally connected}.
\begin{problem}
  \label{pb:connectedness}
  For a given class of lattice polytopes,
  do they form a
  single connected web via inclusion relations?
\end{problem}

We introduce five known classes of lattice polytopes including reflexive polytopes,
for which one could naturally ask \pref{pb:connectedness}.
Let $N\simeq \bZ^d$ and $M=\Hom(N,\bZ)$
be the dual pair of free abelian groups of rank $d$ and 
$N_\bR\coloneqq N\otimes_\bZ \bR$.
The \emph{polar dual} and the \emph{Mavlyutov dual} of a lattice polytope $\nabla$ in $N_\bR$ is defined as
\begin{equation}
  \nabla^* \coloneqq  \lc u\in M_\bR \relmid \la u, v \ra \ge -1 \text{ for all }  v\in \nabla \rc 
  \quad \text{ and } \quad [\nabla^*] \coloneqq \Conv (\nabla^* \cap M),
\end{equation}
respectively.
A reflexive (resp.\ pseudoreflexive) polytope is defined such as
the polar duality (resp.\ the Mavlyutov duality) holds among the same class of lattice polytopes.
Namely, $\nabla$ is \emph{reflexive} if $\nabla^*$ is also a lattice polytope,
and \emph{pseudoreflexive} if $[[\nabla^*]^*] = \nabla$ holds.
While these polytopes are well suited to mirror symmetry as shown by \cite{MR1269718} and \cite{mavlyutov2011mirror},
the following polytopes are more fitted to the arguments in the toric Mori theory.
A lattice polytope $\nabla$ is called \emph{canonical} 
if the origin is the unique interior lattice point, 
\emph{terminal} 
if $\nabla\cap N$ consists of 
the vertices of $\nabla$ and the origin,
and \emph{almost pseudoreflexive}
if the Mavlyutov dual $[\nabla^*]$ 
contains the origin as an interior point.
We shortly remark the geometric meaning of these three classes of lattice polytopes.
A canonical (resp.\ terminal) polytope $\nabla$ corresponds to 
a toric Fano variety with at worst canonical (resp.\ terminal) singularities \cite{MR717617},
which is the anticanonical model of 
the toric variety defined by a projective simplicial fan $\Sigma$ satisfying $\Conv G(\Sigma)=\nabla$. 
An almost pseudoreflexive polytope $\nabla$ corresponds to
a toric Fano variety whose general \emph{elephant} (i.e., 
a member of the anticanonical linear system $\abs{-K}$) is a Calabi--Yau variety.
In fact, the latter is equivalent to that
the Newton polytope $[\nabla^*]$ is
almost pseudoreflexive (and hence, pseudoreflexive in this case) as proved
by \cite[Theorem 2.23]{MR3858013} (and by \cite[Theorem 1]{MR3515305} for one direction). 
There are the following implications among these five classes of polytopes:
\begin{equation}
   \text{terminal} \ \Rightarrow \ \text{canonical} \ \Leftarrow \ \text{almost pseudoreflexive}
   \ \Leftarrow \ \text{pseudoreflexive} \ \Leftarrow \ \text{reflexive}.
\end{equation}
Only the second implication is nontrivial, which follows because $[\nabla^*]^*\supset \nabla$ contains only the origin as 
an internal lattice point
by \cite[Lemma 1.5]{MR3515305}.
A Fano polytope contained in an almost pseudoreflexive (resp.\ canonical, terminal) polytope
is again an almost pseudoreflexive (resp.\ canonical, terminal) polytope.
Hence, by running MMP, \pref{pb:connectedness} for these three classes of 
Fano polytopes comes down to \pref{pb:sarkisov} below.
\begin{problem}
  \label{pb:sarkisov}
For any pair of almost pseudoreflexive (resp.\ canonical, terminal) polytopes 
represented by toric Mori fiber spaces,
is there a pair of representatives 
connected by
a sequence of Sarkisov links 
consisting only of birational models whose associated polytopes are
almost pseudoreflexive (resp.\ canonical, terminal)?
\end{problem}

Note that the global connectedness of $d$-dimensional almost pseudoreflexive polytopes implies 
that of $d$-dimensional pseudoreflexive polytopes.
In fact, this is clear
by taking the pseudoreflexive hull $[[\nabla^*]^*]$ for 
each almost pseudoreflexive polytope $\nabla$
in a sequence 
connecting a pair of pseudoreflexive polytopes.
Together with the fact that a pseudoreflexive polytope is always reflexive when $d\le 4$ \cite{MR1397227}, 
\pref{pb:sarkisov} 
for almost pseudoreflexive polytopes possibly re-proves the connectedness of 
the web of four-dimensional reflexive polytopes due to \cite{MR1901220}
without taking modulo unimodular equivalences nor using the classification.

In order to make \pref{pb:sarkisov} more precise,
we explore the combinatorial description of 
Sarkisov links for toric Mori fiber spaces.
However, it turns out that there is an obstacle in the description of Sarkisov links only in terms of
Fano polytopes (\pref{eg:obstacle}). 
On the other hand, we obtain
a satisfactory description of Sarkisov links
in terms of primitive generating sets in arbitrary dimensions,
which are the combinatorial objects (corresponding to $G(\Sigma)$ above)
that appear between projective fans and Fano polytopes.
In \pref{sc:pgs}, we introduce and develop the notion of primitive generating sets.
In \pref{sc:sarkisov}, we prove \pref{th:sarkisovA}, which gives a combinatorial description of Sarkisov links 
in terms of primitive generating sets.

In two dimensions, things becomes very simple.
Especially, the description in \pref{th:sarkisovA} can be further translated into the 
language of Fano polygons without any additional conditions (\pref{pr:twodim}).
Using this simplicity, 
we give two proofs (constructive and non-constructive) for the following theorem in
\pref{sc:reflexive} by solving \pref{pb:sarkisov} in two dimensions:
\begin{theorem}
   \label{th:polygon}
   The web of reflexive polygons 
   are connected via inclusion relations
   without taking modulo unimodular equivalences.
   The same is true for the web of terminal polygons.
\end{theorem}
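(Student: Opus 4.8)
The plan is to solve \pref{pb:sarkisov} in dimension two for the reflexive and the terminal classes, and then to read off the connectedness of the web directly from the combinatorial dictionary of \pref{pr:twodim}. The starting observation is that a single toric Sarkisov link in two dimensions is realized in the web of Fano polygons as a short zigzag of inclusions $\nabla \supset Q \subset \nabla'$, where $Q$ is obtained from $\nabla$ by deleting one vertex and $\nabla'$ is obtained from $Q$ by inserting a different primitive vertex. Thus, once \pref{pr:twodim} guarantees that such a vertex swap exists within the prescribed class, the two reflexive (resp.\ terminal) polygons $\nabla$ and $\nabla'$ are automatically joined in the web by two inclusion edges through the common core $Q$. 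I would give both a constructive and a non-constructive realization of this idea.

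For the constructive proof I would first use that any reflexive polygon $\nabla$ is contained in a maximal reflexive polygon: since a reflexive polygon has the origin as its unique interior lattice point it has area at most $9/2$, and a bounded-area polygon containing the fixed full-dimensional $\nabla$ is spatially bounded, so only finitely many reflexive polygons contain $\nabla$ and a maximal one exists. By polar duality the maximal reflexive polygons are the duals of the minimal ones, of which there are finitely many up to unimodular equivalence. It then remains to join the extremal reflexive polygons to one fixed standard polygon, say $\Conv\lc \pm e_1, \pm e_2 \rc$, and here I would apply the vertex-swap moves $\nabla \supset Q \subset \nabla'$ repeatedly to realize the unimodular shears and the finitely many changes of combinatorial type, checking admissibility at each step through \pref{pr:twodim}. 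The terminal case runs along identical lines, with the added requirement that each move introduces no lattice point other than the vertices and the origin.

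For the non-constructive proof I would invoke the general fact that any two toric Mori fiber spaces are joined by a sequence of Sarkisov links, combine it with the combinatorial description of these links in \pref{th:sarkisovA}, and then use \pref{pr:twodim} to verify that in two dimensions every such link can be chosen within the reflexive (resp.\ terminal) class; the connectedness of the web then follows without exhibiting any explicit chain. For the reflexive case there is a second, softer route: prove the statement first for the larger class of almost pseudoreflexive polygons and then push a connecting sequence down by applying the pseudoreflexive hull $[[\nabla^*]^*]$ termwise, which preserves the inclusions and lands in the reflexive class because a pseudoreflexive polygon is reflexive when $d\le 4$.

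The main obstacle is exactly the one flagged in \pref{eg:obstacle}: a Sarkisov link is not detectable from the Fano polygon alone, so the admissibility of a vertex swap must be controlled through the finer data of the primitive generating set, and it is \pref{pr:twodim} that makes this control transparent in two dimensions. The remaining delicate point is the downward, vertex-removal direction: deleting a vertex can push the origin onto the boundary or, in the terminal case, uncover an edge lattice point, so the order and choice of swaps must be arranged so that the core $Q$ genuinely stays in the class. Bridging between unimodularly inequivalent extremal polygons is where this verification is most involved, and I expect the bulk of the combinatorial work to lie there.
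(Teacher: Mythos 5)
Both of your routes share the right skeleton (reduce to Mori fiber objects, connect them by Sarkisov links, coarse-grain to polygon inclusions), but each one omits precisely the lemma that makes the paper's corresponding proof work. In the non-constructive route, you invoke the Sarkisov program and \pref{th:sarkisovA}, and then claim that \pref{pr:twodim} lets you ``verify that every such link can be chosen within the reflexive (resp.\ terminal) class.'' It does not: \pref{pr:twodim} only translates the links in \pref{eq:2dimlinks} into elementary links of Fano polygons; it gives no control over \emph{which} Hirzebruch surfaces occur in a given decomposition. An arbitrary Sarkisov decomposition of the birational map between the two toric Mori fiber surfaces can pass through $\bF_m$ with $m\ge 3$, whose polygon $\nabla_m$ is not even canonical (e.g.\ $\nabla_3$ contains $-e_1$ as a second interior lattice point), and in the terminal case already $\bF_2$ is forbidden. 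The entire content of the paper's non-constructive proof is a Noether--Castelnuovo-type rearrangement statement (\pref{lm:smallseq}): any such birational map admits a decomposition $s$ with $M(s)\le \max\{1, M(X), M(X')\}$, proved by commuting elementary transforms so as to strictly decrease the number of occurrences of the maximal $\bF_{M(s)}$. Nothing in your proposal plays this role, so the argument does not close. Your ``softer route'' via almost pseudoreflexive polygons is also vacuous here: in two dimensions almost pseudoreflexive, pseudoreflexive, canonical and reflexive all coincide, so it restates the same problem rather than reducing it.

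The constructive route has the analogous gap. Passing to maximal reflexive polygons (rather than downward, via reductions, to the finitely many Mori fiber polygons as the paper does) is legitimate in principle, but the hard step --- connecting unimodularly inequivalent extremal polygons, i.e.\ realizing the $GL(2,\bZ)$-action by admissible moves --- is exactly what you leave as ``checking admissibility at each step through \pref{pr:twodim}.'' The paper has to do this by hand: \pref{lm:unimodular} decomposes the action of generators of $GL(2,\bZ)$ on the standard Mori fiber polygons (three terminal, four canonical) into explicit conjugated elementary links, e.g.\ the chain \pref{eq:cremona} realizing $S\nabla_{-\infty}$; your proposal contains no substitute for this finite but genuinely nontrivial verification. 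Finally, your basic combinatorial picture of a link is inverted: the coarse-graining of a two-dimensional link of type II is $\nabla \subsetdot \Conv(\nabla\cup\nabla') \supsetdot \nabla'$ (blow-up then blow-down, so the \emph{middle} polygon is the largest, cf.\ \pref{eq:linkm} and \pref{eq:sarkisovA}), not $\nabla \supset Q \subset \nabla'$ with $Q$ obtained by deleting a vertex. Middle-smaller zigzags are not among the elementary links of \pref{th:sarkisovA}, and they are exactly the moves for which the difficulty you yourself flag --- deletion pushing the origin to the boundary, or uncovering edge lattice points --- has no remedy; in the paper's middle-larger links this issue never arises, and only the canonicity/terminality of the single enlarged middle polygon needs to be checked.
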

The latter proof also implies that 
elliptic elephants in smooth projective rational surfaces
can be connected 
via smooth transitions associated with blow-ups and blow-downs 
decomposing a given birational map between ambient spaces (\pref{cr:ellip}).
This could be regarded as a toy version of Reid's fantasy in one dimension.

Note that, for a lattice polygons, reflexive is equivalent to canonical,
so that \pref{th:polygon} yields a complete answer to \pref{pb:connectedness} 
for all the five classes of polytopes in two dimensions.
On the other hand,
the proof is still based on the classification of smooth toric Mori fiber surfaces 
and the hand-picked choice of sequences of Sarkisov links. 
Hopefully, more detailed study of the Sarkisov program could improve the argument 
and provide a way to solve \pref{pb:connectedness}
in higher dimensions and also a new perspective to Reid's
fantasy for Calabi--Yau 3-folds.\\


\emph{Acknowledgements.}
This paper is dedicated to Professor Shinobu Hosono on the occasion of his sixtieth birthday.
The author thanks Atsushi Ito for helpful discussions.
He was supported by Grants-in-Aid for Scientific Research (21K03156).

\section{primitive generating sets}
\label{sc:pgs}
\begin{definition}
We call a finite set $A\subset  N$ a \emph{primitive generating set} of $N_\bR$ if $A$ 
consists of primitive lattice points and generates $N_\bR$ as a cone, i.e.,
$N_\bR = \lc \sum_{v\in A}\lambda_v v \relmid \lambda_v \ge 0 \text{ for all } v\in A \rc$.
\end{definition}

\begin{definition}
   \label{df:reduction}
   An inclusion $A\supset A'$ of primitive generating sets
   is said to be a \emph{reduction} of $A$ 
   if $\abs{A}=\abs{A'}+1$. 
   We denote by $A\supsetdot A'$ a reduction of $A$.
\end{definition}

\begin{definition}
   \label{df:fiberstr}
   For a primitive generating set $A$,
   an inclusion $\Af \subset A$ is said to be
   a \emph{fiber structure} on $A$ if $\Af\ne \emptyset$ 
   is a primitive generating set of the linear span $L$ of $\Af$ and $\Af = L\cap A$.
   A fiber structure $\Af \subset A$ defines 
   the associated exact sequence
   \begin{equation}
      \label{eq:exact}
    \begin{tikzcd}
      0\arrow[r] & \Nf \arrow[r] & N \arrow[r,"\pi"] & \Nb \arrow[r] & 0,
    \end{tikzcd}
   \end{equation}
   where $\Nf \coloneqq N\cap L$ and $\Nb \coloneqq N/\Nf$. 
   For any $v\in A \setminus \Af$, the ray $\bR_+\pi(v)$ in $(\Nb)_\bR$ has the unique 
   primitive generator $\pibar(v) \in \Nb$.
   Thus there is a map 
   \begin{equation}
      \label{eq:pibar}
      \pibar: A \setminus \Af\rightarrow 
      \Ab \coloneqq \lc \pibar(v) \relmid v \in A\setminus \Af  \rc \subset \Nb.
   \end{equation}
   We call $\Ab$ the \emph{base} of the fiber structure $\Af \subset A$,
   which turns out to be a primitive generating set of $(\Nb)_\bR$.
   The map \pref{eq:pibar} is also denoted by $\pibar: A \dashrightarrow \Ab$. 

   A fiber structure $\Af\subset A$ is called \emph{irreducible} if $\abs{A}=\abs{\Af}+\abs{\Ab}$ holds.
   A \emph{Mori fiber structure} is an irreducible fiber structure
   satisfying $\abs{\Af} = \dim L+1$. 
   We call a primitive generating set $A$ that admits a Mori fiber structure
   a \emph{Mori fiber primitive generating set}.
   A Mori fiber structre $\Af \subset A$ on a Mori fiber primitive generating set $A$
   is denoted by $\Af \subsetm A$. 
\end{definition}
By abuse of notation,
we use all the notions in \pref{df:reduction} and \pref{df:fiberstr}
also for Fano polytopes
by replacing 
a primitive generating set $\nabla \cap \Nprim$
with
a Fano polytope $\nabla$,
where $\Nprim$ is the set of primitive lattice points in $N$.
Thus, for a reduction $\nabla\supsetdot \nabla'$ of Fano polytopes, 
$\nabla'$ is a Fano polytope defined as
the convex hull of all primitive lattice points in $\nabla\setminus \lc v\rc$
for a vertex $v\in \nabla$.
Similarly, for a Mori fiber structure $\nablaf \subsetm \nabla$ of Mori fiber Fano polytopes 
(or \emph{Mori fiber polytopes}, for short), 
$\nablaf$ is a terminal simplex and $\abs{\nabla\cap \Nprim}=\abs{\Ab} + \dim L +1$ holds,
where $\Ab$ is the base of $\nablaf\subsetm\nabla$. 
Notice that, even for a fiber structure $\nablaf\subset \nabla$ of Fano polytopes,
the base $\Ab$ needs not to be a Fano polytope.

Let $X_\Sigma$ be a $\bQ$-factorial projective toric variety defined by a simplicial projective fan $\Sigma$.
Denote by $\overline{\NE}(X_\Sigma)$ 
the closed convex cone generated by
numerical equivalence classes of curves.
The following 
is evident from the description of extremal contractions,
e.g., \cite[Proposition 15.4.5]{MR2810322}.
\begin{lemma}
   \label{lm:contractions}
   Let $\varphi_R: X_\Sigma \rightarrow X_{\Sigma'}$ be an extremal contraction of
   a $\bQ$-factorial projective toric variety $X_\Sigma$ with respect to 
   an extremal ray $R \subset \overline{\NE}(X_\Sigma)$. 
   If $\varphi_R$ is a small contraction, it preserves $G(\Sigma) = G(\Sigma')$.
   If $\varphi_R$ is a divisorial contraction, it gives a reduction $G(\Sigma) \supsetdot G(\Sigma')$.
   If $\varphi_R$ is a fibering contraction, 
   it gives a Mori fiber structure $G(\Sigmaf) \subsetm G(\Sigma)$ with the base $G(\Sigma')$
   for a naturally defined subfan $\Sigmaf$ of $\Sigma$.
\end{lemma}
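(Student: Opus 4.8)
The plan is to obtain each of the three statements by reading off the effect on the primitive ray generators from the standard structure of toric extremal contractions recorded in \cite[Proposition 15.4.5]{MR2810322}. First I would set up the bookkeeping: for a $\bQ$-factorial complete simplicial fan the Picard number satisfies $\rho(X_\Sigma)=\abs{G(\Sigma)}-d$ with $d=\dim N_\bR$, and extremality of $R$ means that $\varphi_R$ drops the Picard number by exactly one, i.e.\ $\rho(X_\Sigma)=\rho(X_{\Sigma'})+1$. The three cases are distinguished by the codimension of the exceptional locus (equivalently, by the wall relation attached to $R$), and in each case the combinatorial change of $G(\Sigma)$ is completely determined.

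In the small case $\varphi_R$ is an isomorphism in codimension one, so $\Sigma'$ merges certain maximal cones of $\Sigma$ without disturbing the $1$-skeleton; thus $G(\Sigma)=G(\Sigma')$. In the divisorial case exactly one generator $v$, namely that of the contracted prime divisor, disappears, while the remaining generators $G(\Sigma)\setminus\lc v\rc$ still span $N_\bR$ as a cone because $\Sigma'$ is again complete; hence $G(\Sigma')=G(\Sigma)\setminus\lc v\rc$ is a primitive generating set with $\abs{G(\Sigma)}=\abs{G(\Sigma')}+1$, which is precisely a reduction $G(\Sigma)\supsetdot G(\Sigma')$ in the sense of \pref{df:reduction}.

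The fibering case is where I expect the real work to lie. Here $\varphi_R$ is induced by a surjection of lattices $\pi: N\rightarrow \Nb$ with kernel $\Nf=N\cap L$, where $L$ is the $r$-dimensional fiber direction; this is the exact sequence \pref{eq:exact}, and I would take $\Sigmaf$ to be the subfan of cones of $\Sigma$ contained in $L$. Three points then have to be checked against \pref{df:fiberstr}. (i) The general fiber of $\varphi_R$ is the toric variety $X_{\Sigmaf}$, which is complete; consequently $\Af\coloneqq G(\Sigmaf)$ generates $L$ as a cone, and since a generator of $\Sigma$ lies in $L$ exactly when it is a ray of $\Sigmaf$, one gets the saturation $\Af=L\cap G(\Sigma)$. (ii) This general fiber is a $\bQ$-factorial projective toric variety of Picard number one, i.e.\ a fake weighted projective space, so its fan has exactly $r+1$ rays, giving the Mori condition $\abs{\Af}=\dim L+1$. (iii) Irreducibility is a Picard-number count: from $\abs{G(\Sigma)}=\rho(X_\Sigma)+d$, $\abs{\Af}=1+r$, and $\abs{G(\Sigma')}=\rho(X_{\Sigma'})+(d-r)$, the relation $\rho(X_\Sigma)=\rho(X_{\Sigma'})+1$ yields $\abs{G(\Sigma)}=\abs{\Af}+\abs{G(\Sigma')}$. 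Finally, $\pi$ carries each ray of $\Sigma$ outside $\Sigmaf$ onto a ray of the base fan $\Sigma'$, so the base $\Ab=\pibar(G(\Sigma)\setminus\Af)$ of \pref{eq:pibar} is identified with $G(\Sigma')$, exhibiting the Mori fiber structure $G(\Sigmaf)\subsetm G(\Sigma)$ with base $G(\Sigma')$.

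The only genuine obstacle is thus the fibering case, and it rests entirely on two standard inputs: that the general fiber of a toric Mori fibration is a Picard-number-one (fake weighted projective) toric variety, and that an extremal contraction lowers the Picard number by one. Granting these from the cited classification, the defining conditions of \pref{df:fiberstr} reduce to the elementary cardinality identities above, and the lemma follows.
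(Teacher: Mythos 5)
Your proposal is correct and takes essentially the same route as the paper, which simply declares the lemma ``evident from the description of extremal contractions'' in \cite[Proposition 15.4.5]{MR2810322}; you have fleshed out exactly that reference, with the small and divisorial cases read off directly and the fibering case verified through the standard facts (fibers are fake weighted projective spaces, relative Picard number one) combined with the cardinality count $\abs{G(\Sigma)}=\rho(X_\Sigma)+d$. The Picard-number bookkeeping you supply for irreducibility of the fiber structure is precisely the content the paper leaves implicit, so there is no gap and no divergence of method.
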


\begin{remark}
   Note that, 
   if $\varphi_R$ is a fibering contraction (i.e., the last case in \pref{lm:contractions}), 
   $X_\Sigma$ is automatically a toric Mori fiber space
   with the fibers isomorphic to $X_{\Sigmaf}$, that is, $K_{X_{\Sigma}}.R<0$.
   In fact, $R$ is the outer normal of a facet of the pseudoeffective cone 
   $\overline{\Eff}(X_{\Sigma})$,
   and $-K_{X_\Sigma}$ is big so that its numerical class is contained in the interior of $\overline{\Eff}(X_{\Sigma})$.
   Since $\overline{\Eff}(X_\Sigma)$ is strongly convex, $K_{X_{\Sigma}}.R<0$ always holds.
\end{remark}
For any primitive generating set $A$, 
there exits a simplicial projective fan $\Sigma$ such that $G(\Sigma) =A$,
which we call a \emph{projective $A$-maximal fan},
following the case
where $A$ is (the set of primitive lattice points in) a reflexive polytope
\cite[Definition 2.1]{fredrickson2015generalized}.
The following is a key lemma which shows the converse of \pref{lm:contractions} also holds in a sense:
\begin{lemma}
  \label{lm:lift}
  For any primitive generating set $A$ and 
  any pair of 
  projective $A$-maximal fans $\Sigma$ and $\Sigma'$,
  the natural birational map 
  $\varphi: X_\Sigma \dashrightarrow X_{\Sigma'}$ (induced by the identity map of $N$) is decomposed into 
  a sequence of flips, flops and inverse flips.
  For any reduction $A\supsetdot A'$ and 
  any projective $A'$-maximal fan $\Sigma'$,
  there exists an extremal divisorial contraction 
  $X_{\Sigma} \rightarrow X_{\Sigma'}$ 
  such that $\Sigma$ is a projective $A$-maximal fan.
  For any Mori fiber structure $\Af \subsetm A$ with the base $\Ab$
  and any projective $\Ab$-maximal fan $\Sigmab$,
  there exists a toric Mori fiber space $X_\Sigma \rightarrow X_{\Sigmab}$ with fibers 
  isomorphic to $X_{\Sigmaf}$
  such that $\Sigma$ is a projective $A$-maximal fan and 
  $\Sigmaf\subset \Sigma$ is a projective $\Af$-maximal fan.
\end{lemma}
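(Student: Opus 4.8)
The plan is to treat the three assertions in turn, in each case realizing the combinatorial datum by an explicit fan and then reading off the type of the resulting toric map; throughout I would use that a complete simplicial fan is determined, beyond its set of rays, only by the combinatorics of its maximal cones. \emph{First assertion.} Since $\Sigma$ and $\Sigma'$ have the same $1$-skeleton $A$, the varieties $X_\Sigma$ and $X_{\Sigma'}$ carry the same torus-invariant prime divisors and are isomorphic in codimension one, so $\varphi$ is a small $\bQ$-factorial modification. I would then invoke the Mori chamber (GKZ) decomposition: the nef cones of all small $\bQ$-factorial modifications of $X_\Sigma$ subdivide the movable cone $\overline{\mathrm{Mov}}(X_\Sigma)\subset N^1(X_\Sigma)_\bR$, and each such modification is again a projective $A$-maximal fan because it is isomorphic to $X_\Sigma$ in codimension one. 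Both $\operatorname{Nef}(X_\Sigma)$ and $\operatorname{Nef}(X_{\Sigma'})$ are full-dimensional chambers whose ample interiors are interior points of the convex cone $\overline{\mathrm{Mov}}(X_\Sigma)$. Choosing a path between these interiors that crosses the separating walls transversally and one at a time, each wall-crossing factors as a small contraction $X\to X_0$ followed by the inverse of another, hence is a flip, a flop, or an inverse flip according to the sign of $K$ on the contracted curve class; every fan met along the way is a projective $A$-maximal fan.

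\emph{Second assertion.} Write $A = A' \sqcup \{v\}$. As $\Sigma'$ is complete, $v$ lies in the relative interior of a unique cone $\sigma \in \Sigma'$, and I would let $\Sigma$ be the star subdivision of $\Sigma'$ at $v$. It is simplicial, has ray set $A' \cup \{v\} = A$, and the refinement morphism $X_\Sigma \to X_{\Sigma'}$ is projective (a toric blow-up), so $X_\Sigma$ is projective and $\Sigma$ is a projective $A$-maximal fan. This morphism contracts the prime divisor $D_v$ of the new ray onto the orbit of $\sigma$; since $\rho(X_\Sigma) - \rho(X_{\Sigma'}) = \abs{A} - \abs{A'} = 1$ its relative Picard number is one, so it contracts a single extremal ray, and, a divisor being contracted, it is an extremal divisorial contraction.

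\emph{Third assertion.} Irreducibility gives $\abs{A \setminus \Af} = \abs{\Ab}$, so $\pibar$ is a bijection and every $b \in \Ab$ has a unique lift $\tilde v_b \in A \setminus \Af$. Let $\Sigmaf$ be the complete simplicial fan on $L_\bR$ whose rays are $\Af$; the Mori fiber condition $\abs{\Af} = \dim L + 1$ forces its maximal cones to be the $\dim L$-element subsets of $\Af$, so $X_{\Sigmaf}$ is a Fano fake weighted projective space. I would build $\Sigma$ as the associated split fan, with maximal cones $\operatorname{Cone}(\tau(1) \cup \{\tilde v_b : b \in \bar\sigma(1)\})$ for $\tau$ maximal in $\Sigmaf$ and $\bar\sigma$ maximal in $\Sigmab$. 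Then $\pi$ sends each such cone onto $\bar\sigma$, making $\pi$ a map of fans and $X_\Sigma \to X_{\Sigmab}$ a proper surjection with fibers $X_{\Sigmaf}$; the cones lying over $\bar\sigma = \{0\}$ recover $\Sigmaf$, so $\Sigmaf \subset \Sigma$ is a projective $\Af$-maximal fan. Completeness, simpliciality, and $G(\Sigma) = \Af \cup \{\tilde v_b : b\in\Ab\} = A$ follow from the corresponding properties of $\Sigmaf$ and $\Sigmab$. Because the fiber $X_{\Sigmaf}$ is Fano, $K_{X_\Sigma}$ is negative on the fiber curves, and the relative Picard number is $\rho(X_\Sigma) - \rho(X_{\Sigmab}) = \abs{\Af} - \dim L = 1$; hence $X_\Sigma \to X_{\Sigmab}$ is an extremal fibering contraction, i.e.\ a toric Mori fiber space, consistently with \pref{lm:contractions}.

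The hard part will be the projectivity of the split fan $\Sigma$ in the third assertion, which, unlike in the first two cases, is handed to us neither by a subdivision nor by a chamber structure. I would prove it by exhibiting a strictly convex support function of the form $\psi = m\,(\bar\psi \circ \pi) + \psi_f$, where $\bar\psi$ is strictly convex for $\Sigmab$, $\psi_f$ is a strictly convex support function of the projective fiber fan $\Sigmaf$ suitably extended using the lifts $\tilde v_b$, and $m \gg 0$ — the toric incarnation of the fact that a relatively ample class plus a large multiple of the pullback of an ample class from the base is ample. Verifying strict convexity across the two kinds of walls of $\Sigma$, those lying over a wall of $\Sigmab$ and those internal to a single fiber, is the one genuinely computational step, and some care is required because the $\tilde v_b$ are not sections in the naive sense when the fibers are fake weighted projective spaces rather than ordinary projective spaces.
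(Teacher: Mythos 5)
Your proposal is correct, but only your first assertion follows the paper's route; for the second and third you take a genuinely different, constructive path. The paper proves all three statements with a single piece of machinery: the identification of $\overline{\Mov}(X_\Sigma)$ with the cone $\Mov_{\mathrm{GKZ}}$ in the secondary fan defined by $A$. The first statement is the chamber-crossing argument you also give; but for a reduction $A\supsetdot A'$ the paper constructs nothing explicit: since $\Mov_{\mathrm{GKZ}}$ is full-dimensional and convex, some chamber contains the cone corresponding to $\Sigma'$ as a face, and this face relation in the secondary fan \emph{is} the extremal divisorial contraction (an extension of Fredrickson's Lemma 6.1), with projectivity of $\Sigma$ automatic because chambers correspond to projective $A$-maximal fans; for the third statement the paper runs the identical argument starting from the generalized fan $\pi^{-1}\Sigmab$ in place of $\Sigma'$. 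You instead build the fans by hand: a star subdivision of $\Sigma'$ at $v$ (projectivity from the standard fact that star subdivisions induce projective morphisms, extremality from relative Picard number one plus a contracted divisor), and a split fan over $\Sigmab$ with fiber fan $\Sigmaf$, where, as you rightly flag, projectivity is the one delicate point. Your sketch of that point does go through: the piecewise extension of $\psi_f$ via the lifts is well defined because every cone of the split fan decomposes uniquely into a fiber part plus a lift part; the extended function is strictly convex across walls lying inside a single slab $\pi^{-1}(\bar\sigma)$, while across walls lying over walls of $\Sigmab$ the kink of $\bar\psi\circ\pi$ is strictly positive, so $m\gg 0$ wins wall by wall (and strict convexity wall-by-wall suffices on a complete fan); note also that irreducibility of $\Af\subsetm A$ is exactly what makes $\pibar$ bijective, hence $G(\Sigma)=A$. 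The trade-off is clear: the paper's argument is shorter, uniform across all three statements, and gets projectivity for free from the chamber structure, at the price of invoking the secondary-fan machinery; yours is elementary and produces explicit fans and contractions (which is what one actually uses in the two-dimensional computations of Section 4), at the price of verifying the fan axioms and strict convexity by hand.
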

\begin{proof}
  All the statements in \pref{lm:lift} follow from the description of the movable cone 
  $\overline{\Mov}(X_\Sigma)$ of a $\bQ$-factorial projective toric variety $X_\Sigma$
  as the corresponding cone
  $\Mov_{\mathrm{GKZ}}$ in the secondary fan $\Sigma_{\mathrm{GKZ}}$
  defined by $A=G(\Sigma)$
  (see (15.1.5), Proposition 15.1.4 and Theorem 15.1.10(c) in \cite{MR2810322}).
  The first statement follows because
  $\Mov_{\mathrm{GKZ}}$ is a full-dimensional convex rational polyhedral cone
  decomposed into finite chambers corresponding to projective $A$-maximal fans.
  In fact, along a general path between the two chambers corresponding to $\Sigma$ and $\Sigma'$, 
  one obtains a sequence of flips, flops and inverse flips, which decomposes
  $\varphi: X_\Sigma \dashrightarrow X_{\Sigma'}$ (see \cite[Theorem 15.3.13]{MR2810322}).
  The second statement is a simple
  extension of \cite[Lemma 6.1]{fredrickson2015generalized} and the same proof works.
  That is, since $\Mov_{\mathrm{GKZ}}$ is full-dimensional and convex, there exists a chamber
  containing the cone corresponding to $\Sigma'$ as a face, which gives 
  an extremal divisorial contraction
  (refer to \cite[Lemma 14.4.6]{MR2810322} for the face structure of the secondary fan).
  Exactly the same proof works for the third statement 
  if one starts from a generalized fan 
  $\pi^{-1} \Sigmab \coloneqq \lc \pi_\bR^{-1}(\sigma) \relmid \sigma \in \Sigmab \rc$ instead of $\Sigma'$,
  where $\pi_\bR:N_\bR \rightarrow (\Nb)_\bR$ is the natural projection defined by \pref{eq:exact}.
\end{proof}

\begin{remark}
   \label{rm:shed}
  The \emph{shed} of a fan $\Sigma$
  is defined as 
  $\shed(\Sigma) = \bigcup_{\sigma\in \Sigma} 
  \Conv \lb \lc 0\rc \cup  G(\sigma) \rb \subset \Conv G(\Sigma)$.
  As observed originally by \cite[Proposition 4.3]{MR717617},
  for a birational contraction $\varphi_R: X_{\Sigma} \rightarrow X_{\Sigma'}$ of an extremal ray 
  $R\subset \overline{\NE}(X_\Sigma)$,
  $\shed(\Sigma') \subsetneq \shed(\Sigma)$ 
  (resp.\ $\shed(\Sigma') = \shed(\Sigma)$, $\shed(\Sigma') \supsetneq \shed(\Sigma)$)
  if and only if
  $K_{X_\Sigma}.R <0$ (resp.\ $K_{X_\Sigma}.R =0$, $K_{X_\Sigma}.R >0$).
  Be aware that the sign of $K_\Sigma.R$ 
  for a divisorial contraction $\varphi_R: X_{\Sigma} \rightarrow X_{\Sigma'}$
  actually depends not only on the primitive generating sets 
  $G(\Sigma)$ and $G(\Sigma')$ but
  also on $\shed(\Sigma')$ in general.
  However, for a reduction $\nabla \supsetdot \nabla'$ of Fano polytopes,
  any divisorial contraction $\varphi_R: X_{\Sigma} \rightarrow X_{\Sigma'}$ 
  with $\Conv G(\Sigma) = \nabla$ and $\Conv G(\Sigma') = \nabla'$
  satisfies $K_\Sigma.R < 0$ since $\shed(\Sigma') \subset \nabla'$ does not contain
  $v\in \nabla\setminus \nabla'$.
\end{remark}


\section{Description of toric Sarkisov links}
\label{sc:sarkisov}
A \emph{Sarkisov link} is one of the following four types of diagrams of normal projective varieties
that connects two Mori fiber spaces $p: X \rightarrow S$ and $p': X' \rightarrow S'$:
\begin{equation}
  \label{eq:links}
   \arraycolsep=8pt
  \begin{array}{cccc}
   \text{type I} & \text{type II} & \text{type III} & \text{type IV} \\
  \begin{tikzcd}[column sep=tiny]
    X \arrow[rr, dashrightarrow] \arrow[d, "p"'] && X'' \arrow[d]\\
    S \arrow[dr] && X' \arrow[ld, "p'"]\\
    & S'&
  \end{tikzcd} &
  \begin{tikzcd}[column sep=tiny]
    X'' \arrow[rr, dashrightarrow] \arrow[d] && X''' \arrow[d]\\
    X \arrow[d, "p"'] && X' \arrow[d, "p'"]\\
    S \arrow[rr, equal] & \phantom{.} &S'
  \end{tikzcd} &
  \begin{tikzcd}[column sep=tiny]
    X'' \arrow[rr, dashrightarrow] \arrow[d] && X' \arrow[d, "p'"]\\
    X \arrow[dr, "p"'] && S' \arrow[ld]\\
    & S&
  \end{tikzcd} &
  \begin{tikzcd}[column sep=tiny]
    X \arrow[rr, dashrightarrow] \arrow[d, "p"'] && X' \arrow[d, "p'"]\\
    S \arrow[dr] && S' \arrow[ld]\\
    & R&
  \end{tikzcd}
  \end{array}
\end{equation}
Here
all the varieties except $R$ are $\bQ$-factorial.
Every dashed arrow denotes a sequence of flips, flops and inverse flips,
and 
every solid arrow denotes an extremal contraction.
In the latter case, each solid arrow between $X$'s is a divisorial contraction of a $K$-negative extremal ray,  
each arrow between $S$'s is either divisorial or fibering contraction, and 
each arrow toward $R$ is either small or fibering contraction.

\begin{theorem}
Let $\Af \subsetm A$ and $\Af' \subsetm A'$ be Mori fiber primitive generating sets 
associated with toric Mori fiber spaces $p: X\rightarrow S$ and $p': X'\rightarrow S'$, respectively.
Suppose that 
these are connected
by a single Sarkisov link 
in \pref{eq:links}.
Then $\Af \subsetm A$ and $\Af' \subsetm A'$ 
coincide or
are related by one of the following diagrams and their inverses
(i.e., the diagrams switching $\Af\subsetm A$ and $\Af'\subsetm A'$):
\label{th:sarkisovA}
\begin{equation}
  \label{eq:sarkisovA}
   \arraycolsep=8pt\def\arraystretch{1.3}
  \begin{array}{ccccc}
   \text{type I}{}_\mathrm{d} & \text{type I}{}_\mathrm{m} & \text{type II}{}_{\mathrm{irr}} 
   & \text{type II}{}_{\mathrm{ni}}  &  \text{type IV}{}_ {\mathrm{m}}\\
  \begin{tikzcd}[row sep = small, column sep=-7pt]
    A & \supsetdot &  A'\\
    \Af \arrow[u, phantom, sloped, "\subsetm"]  & = & \Af'\arrow[u, phantom, sloped, "\subsetm"]
  \end{tikzcd} &
  \begin{tikzcd}[row sep = small, column sep=-7pt]
    A & = &  A'' & \supsetdot & A'\\
    \Af \arrow[u, phantom, sloped, "\subsetm"]
    &\subsetm& \Af''\arrow[u, phantom, sloped, "\subset"] & \supsetdot & \Af'\arrow[u, phantom, sloped, "\subsetm"]
  \end{tikzcd} & 
  \begin{tikzcd}[row sep = small, column sep=-7pt]
    A & \subsetdot &  A'' & \supsetdot & A'\\
    \Af \arrow[u, phantom, sloped, "\subsetm"]
    &\subsetdot & \Af''\arrow[u, phantom, sloped, "\subset"] & \supsetdot & \Af'\arrow[u, phantom, sloped, "\subsetm"]
  \end{tikzcd} & 
  \begin{tikzcd}[row sep = small, column sep=-7pt]
    A & \subsetdot &  A'' & \supsetdot & A'\\
    \Af \arrow[u, phantom, sloped, "\subsetm"]
    &=& \Af''\arrow[u, phantom, sloped, "\subset"] & = & \Af'\arrow[u, phantom, sloped, "\subsetm"]
  \end{tikzcd} & 
  \begin{tikzcd}[row sep = small, column sep=-7pt]
    A & = &  A'' & = & A'\\
    \Af \arrow[u, phantom, sloped, "\subsetm"]
    &\subsetm& \Af''\arrow[u, phantom, sloped, "\subset"] &\supsetm& 
    \Af'\arrow[u, phantom, sloped, "\subsetm"]
  \end{tikzcd}
  \end{array}
\end{equation}
\end{theorem}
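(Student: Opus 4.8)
The plan is to read each Sarkisov link in \pref{eq:links} arrow by arrow and to record its effect on the associated primitive generating sets by means of \pref{lm:contractions}. Every dashed arrow is a composite of flips, flops and inverse flips, hence preserves the primitive ray generators and yields an equality of primitive generating sets; every solid divisorial contraction between the $X$'s yields a reduction $\supsetdot$; every fibering contraction yields a Mori fiber structure $\subsetm$; and the solid arrows among the $S$'s or toward $R$ act on the bases, as reductions (divisorial), as a further fiber structure on the base (fibering), or trivially (small). Throughout, the two prescribed Mori fiber structures $\Af\subsetm A$ and $\Af'\subsetm A'$ are the ones cut out by $p$ and $p'$, so their bases are $\Ab=G(\Sigma_S)$ and $\Ab'=G(\Sigma_{S'})$.

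I would first treat type~I, type~III being its inverse. The top dashed arrow gives $A=A''$ and the divisorial contraction $X''\to X'$ gives $A''\supsetdot A'$, whence $A\supsetdot A'$; it then remains to compare the fibers, which is dictated by the base arrow $S\to S'$. If $S\to S'$ is divisorial, then $\Ab\supsetdot\Ab'$ and, since $\pibar$ restricts to a bijection $A\setminus\Af\to\Ab$ by irreducibility, the ray removed from $A$ lies outside $\Af$, so $\Af=\Af'$ and we land in type~$\mathrm{I}_{\mathrm{d}}$. If $S\to S'$ is fibering, it endows $\Ab$ with a Mori fiber structure over $\Ab'$, which pulls back through \pref{eq:exact} to enlarge $\Af$ to a fiber structure $\Af''\subset A$ with $\Af\subsetm\Af''$; the divisorial contraction then removes a ray of $\Af''$, producing $\Af''\supsetdot\Af'$ together with the Mori fiber structure $\Af'\subsetm A'$ over $\Ab'$, that is, type~$\mathrm{I}_{\mathrm{m}}$.

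For type~II the two outer vertical arrows are divisorial contractions $X''\to X$ and $X'''\to X'$, while $X''\dashrightarrow X'''$ preserves generators; hence $A\subsetdot A''\supsetdot A'$ with a single $A''$, and the equality $S=S'$ forces $\Ab=\Ab'$ so that the base is fixed. Writing $\Af''\subset A''$ for the fiber structure of the ambient fibration over $S$, each reduction removes one ray of $A''$: if both removed rays lie in $\Af''$ the fiber shrinks on both sides and we obtain type~$\mathrm{II}_{\mathrm{irr}}$ with $\Af\subsetdot\Af''\supsetdot\Af'$; if instead they project to base rays that already carry another element of $A''$, so that $\Af''\subset A''$ is non-irreducible over those rays, the fiber is preserved and we obtain type~$\mathrm{II}_{\mathrm{ni}}$ with $\Af=\Af''=\Af'$. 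In type~IV the only arrow off the base is the dashed $X\dashrightarrow X'$, so $A=A'=A''$ and both $p$ and $p'$ fiber the same $A$ over a common $R$; when $S\to R$ and $S'\to R$ are both fibering, the composite fibration $X\to R$ has a fiber $\Af''\subset A$ containing $\Af$ and $\Af'$ as Mori sub-structures, giving type~$\mathrm{IV}_{\mathrm{m}}$ with $\Af\subsetm\Af''\supsetm\Af'$, while when both are small the bases and hence the two Mori fiber structures coincide.

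The step I expect to be the main obstacle is exactly the compatibility invoked above. One must verify that a reduction or a fibering on the base lifts, through the exact sequence \pref{eq:exact}, the projection $\pibar$ and the three assertions of \pref{lm:lift}, to an operation on $A$ that respects the fiber structure and preserves the Mori condition $\abs{\Af}=\dim L+1$. Concretely, I would need to show that the divisorial contraction occurring in a type~I or type~II link removes precisely the single ray prescribed by the base operation, and the delicate bookkeeping of whether that ray lies in the fiber or over an already occupied base ray is what distinguishes the irreducible from the non-irreducible case in type~II.
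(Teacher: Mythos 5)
Your arrow-by-arrow translation via \pref{lm:contractions} and the resulting case analysis follow the same skeleton as the paper's proof, and your five output diagrams agree with \pref{eq:sarkisovA}. The difficulty is that the step you defer at the end as ``the main obstacle'' is not a peripheral verification: it is essentially the whole content of the paper's argument, and your proposal asserts its conclusions rather than proving them. Three assertions in particular are left unestablished. In type II, your dichotomy (both removed rays lie in the fiber, or both project to already occupied base rays) is not a priori exhaustive: nothing you write excludes the mixed case in which one removed ray lies in the fiber span and the other does not. In type I${}_\mathrm{m}$, the claim that the divisorial contraction ``removes a ray of $\Af''$'' is exactly the nontrivial point. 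In type IV${}_\mathrm{m}$, writing $\Af\subsetm\Af''$ requires proving that the fiber structure $\Af\subset\Af''$ is \emph{irreducible}; this does not follow from the definitions alone. Even your type I${}_\mathrm{d}$ justification is incomplete as stated: the bijection $A\setminus\Af\to\Ab$ coming from $p$ alone does not determine which ray the divisorial contraction removes, so it cannot by itself place that ray outside $\Af$.

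The paper supplies precisely the two missing ideas. First, it cuts each Sarkisov link into the two compositions of extremal contractions meeting at the middle model, classifies these compositions (fibering--small, fibering--divisorial, divisorial--fibering, fibering--fibering) into ``half-links'', and exploits the fact that the two halves of one Sarkisov link share the \emph{same} middle fiber structure $\Af''\subset A''$: the dashed arrows preserve primitive generating sets, and the fiber span is the kernel of the projection to the common deepest base. Since irreducibility of this shared middle structure is intrinsic to the middle, the two halves must be of matching kinds; this is what rules out your mixed cases in type II and forces the removed ray into the fiber span in type I${}_\mathrm{m}$. Second, for the fibering--fibering composition the paper proves irreducibility by counting: the equalities $\abs{B}=\abs{\Bf}+\abs{\Bb}=\abs{\Bf}+\abs{B_{\mathrm{b,f}}}+\abs{\Bb'}$, which hold because the two given Mori fiber structures are irreducible, force the general inequalities $\abs{B}\ge\abs{\Bf'}+\abs{\Bb'}$ and $\abs{\Bf'}\ge\abs{\Bf}+\abs{B_{\mathrm{b,f}}}$ to be equalities; that is exactly the Mori condition you assert in types I${}_\mathrm{m}$ and IV${}_\mathrm{m}$. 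With these two ingredients your outline becomes the paper's proof; without them it reduces the theorem to unproved compatibility claims.
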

We call one of the diagrams in \pref{eq:sarkisovA} and their inverses an \emph{elementary link} for
Mori fiber primitive generating sets.
The inverse of an elementary link of type I${}_d$ (resp.\ type I${}_m$) is referred to as 
that of type III${}_d$ (resp.\ type III${}_m$).
It is natural to add the trivial link, $A=A$ with $\Af=\Af'$, as an elementary link of type IV${}_\mathrm{s}$.
The scripts ``$\mathrm{s}$'', ``$\mathrm{d}$'', ``$\mathrm{m}$''
indicate that the contraction of $S$'s in the corresponding Sarkisov link is
a small contraction, a divisorial contraction, a Mori fiber space, respectively.
Also, ``$\mathrm{irr}$'' and ``$\mathrm{ni}$'' for type II links indicate 
that all fibers of the composite contractions $X''\rightarrow S$ and $X'''\rightarrow S'$ are irreducible, and
that the fibers are not irreducible over a common invariant prime divisor of the base $S=S'$, respectively.
\begin{proof}
We begin with describing the composition of two extremal contractions $q$ and $q'$,
\begin{equation}
 \begin{tikzcd}
   Y \arrow[r, "q"] & Y' \arrow[r, "q'"] & Y''
 \end{tikzcd}
\end{equation}
appeared in one of the Sarkisov links in \pref{eq:links} for 
$\bQ$-factorial projective toric varieties. 
There are four types of such compositions:
\begin{enumerate}
   \item \label{it:1} $q$ is a fibering contraction and $q'$ is a small contraction,
   \item \label{it:2} $q$ is a fibering contraction and $q'$ is a divisorial contraction,
   \item \label{it:3} $q$ is a divisorial contraction and $q'$ is a fibering contraction,
   \item \label{it:4} both $q$ and $q'$ are fibering contractions.
\end{enumerate}
By \pref{lm:contractions}, 
the contractions $q$ and $q'$ are described by primitive generating sets as 
\begin{enumerate}
   \item[\pref{it:1}] $\Bf\subsetm B$ and $\Bb=\Bb'$,
   \item[\pref{it:2}] $\Bf\subsetm B$ and $\Bb\supsetdot \Bb'$,
   \item[\pref{it:3}] $B\supsetdot B'$ and $\Bf' \subsetm B'$,
   \item[\pref{it:4}] $\Bf \subsetm B$ and $B_{\mathrm{b, f}}\subsetm \Bb$,
\end{enumerate}
for each type of composition, respectively. 
We shall combine two of such descriptions into
an elementary link for Mori fiber primitive generating sets.
Hereafter, we denote by
$\Af \subsetm A$ and $\Af' \subsetm A'$  
the two Mori fiber structures
corresponding to $p$ and $p'$ 
in each Sarkisov link,
respectively.

A composition of type \pref{it:1} only appears in the Sarkisov link of type IV${}_\mathrm{s}$.
In this Sarkisov link, 
both compositions 
are of type \pref{it:1}
so that $A = A'=B$ and $\Ab = \Ab'=\Bb$ by \pref{lm:contractions}, and hence, $\Af = \Af'$ as well.
This yields the trivial elementary link of type IV${}_\mathrm{s}$.

A composition of type \pref{it:2} only appears in the Sarkisov link of type I${}_\mathrm{d}$ 
(or type III${}_\mathrm{d}$ as its inverse).
In this case, we have a reduction $B\supsetdot B'\coloneqq \pi^{-1}(\Bb'\cup \lc0 \rc)\cap B$
since 
the fiber structure $\Bf\subsetm B$ is irreducible so that
$B\setminus B' = \pi^{-1}(\Bb\setminus \Bb')\cap B$ consists of exactly one point.
Thus we obtain the elementary link of type I${}_\mathrm{d}$,
\begin{equation}
   \label{eq:diagram1}
   \begin{tikzcd}[row sep = small, column sep=-5pt]
      B & \supsetdot & B'\\
      \Bf \arrow[u, phantom, sloped, "\subsetm"]  & = & \Bf'\arrow[u, phantom, sloped, "\subsetm"]
   \end{tikzcd},
\end{equation}
with $\Bb \supsetdot \Bb'$ and 
$\Bf'\coloneqq B'\cap L$, where $L$ is the linear span of $\Bf$.
Clearly, the fiber structure $\Bf' \subset B'$ is irreducible and 
its base coincides with $\Bb'$ given in advance.
We may put $A=B$ and $A' = B'$ (resp.\ $A=B'$ and $A' =B$)
for the Sarkisov link of type I${}_\mathrm{d}$ (resp.\ type III${}_\mathrm{d}$).
In either case, the opponent composition is of type \pref{it:3} below.

For a type \pref{it:3} composition, we have a sequence 
$B \supsetdot B' \supsetm \Bf'$.  Let $v$ be the unique element in 
$B\setminus B'$ and $L'$ be the linear span of $\Bf'$.
Then, 
depending on whether $v \not \in L'$ or $v\in L'$, 
we can make the following diagrams, respectively:
\begin{equation}
   \label{eq:diagram2}
   \begin{tikzcd}[row sep = small, column sep=-5pt]
      B & \supsetdot & B'\\
      \Bf \arrow[u, phantom, sloped, "\subset"]  & = & \Bf'\arrow[u, phantom, sloped, "\subsetm"]
   \end{tikzcd}, \quad \text{or} \quad
   \begin{tikzcd}[row sep = small, column sep=-5pt]
      B & \supsetdot & B'\\
      \Bf \arrow[u, phantom, sloped, "\subset"]  & \supsetdot & \Bf'\arrow[u, phantom, sloped, "\subsetm"]
   \end{tikzcd}.
\end{equation}
In fact, 
in the case of 
$v \not\in L'$,
it is clear that $\Bf\coloneqq \Bf' \subset B$ defines a fiber structure. 
Furthermore, if the fiber structure is irreducible, 
the left diagram in \pref{eq:diagram2}
coincides with the elementary link of type I${}_\mathrm{d}$ \pref{eq:diagram1},
which is already discussed.
If the fiber structure is not irreducible, 
we call the diagram a \emph{non-irreducible half-link},
which should be coupled with another half-link. 
In the case of $v\in L'$, 
we have a reduction
$\Bf\coloneqq B\cap L' \supsetdot \Bf'$ as in
the right diagram in \pref{eq:diagram2}, and
$\Bf \subset B$ becomes an irreducible fiber structure.
We call the diagram an \emph{irreducible half-link I}.

Finally, in the case of a type \pref{it:4}, 
we denote by $\Bb'$ the base of the fiber structure $B_\mathrm{b, f}\subsetm \Bb$.
Accordingly, we set a diagram of lattices and fit all relevant sets of lattice points together:
\begin{equation}
   \begin{tikzcd}[row sep = small]
      N \arrow[r, twoheadrightarrow, "\pi"] & \Nb \arrow[r, twoheadrightarrow, "\pi_\mathrm{b}"] & \Nb' \\
      \Nf' \arrow[u, phantom, sloped, "\subset"]  \arrow[r, twoheadrightarrow] &  N_{\mathrm{b,f}} 
      \arrow[u, phantom, sloped, "\subset"]  & \\
      \Nf\arrow[u, phantom, sloped, "\subset"] & &
   \end{tikzcd}
   , \qquad
   \begin{tikzcd}[row sep = small]
      B \arrow[r, dashrightarrow, "\pibar"]& \Bb \arrow[r, dashrightarrow, "\overline{\pi}_\mathrm{b}"] & \Bb' \\
      \Bf' \arrow[r, dashrightarrow] \arrow[u, phantom, sloped, "\subset"]  &  B_{\mathrm{b,f}} 
      \arrow[u, phantom, sloped, "\subsetm"] & \\
      \Bf\arrow[u, phantom, sloped, "\subset"] &&
   \end{tikzcd},
\end{equation}
where $\Nf' = \ker(\pi_\mathrm{b}\circ \pi)$ and $\Bf' = B \cap \Nf'$.
Since 
$B_{\mathrm{b,f}}\subset N_{\mathrm{b,f}}$ is a primitive generating set,
so is $\Bf'\subset \Nf'$, and hence,
$\Bf'\subset B$ and $\Bf \subset \Bf'$ are fiber structures.
By definition, Mori fiber structures $\Bf \subsetm B$ and $B_{\mathrm{b,f}}\subsetm \Bb$
are irreducible, and hence, the equalities
$\abs{B}=\abs{\Bf}+\abs{\Bb} = \abs{\Bf} + \abs{B_{\mathrm{b,f}}} + \abs{\Bb'}$ hold.
Then the inequalities
$\abs{B} \ge \abs{\Bf'} + \abs{\Bb'}$ and $\abs{\Bf'} \ge \abs{\Bf} + \abs{B_{\mathrm{b,f}}}$
for fiber structures 
$\Bf'\subset B$ and $\Bf \subset \Bf'$
must be equalities, that is,
both $\Bf'\subset B$ and $\Bf \subset \Bf'$ are also irreducible. 
In particular, $\Bf \subset \Bf'$ is a Mori fiber structure.
Thus we obtain another diagram:
\begin{equation}
   \label{eq:diagram3}  
   \begin{tikzcd}[row sep = small, column sep=-5pt]
      B & = & B'\\
      \Bf \arrow[u, phantom, sloped, "\subsetm"]  & \subsetm & \Bf'\arrow[u, phantom, sloped, "\subset"]
   \end{tikzcd}.
\end{equation}
We call the diagram an \emph{irreducible half-link II}.

Now we combine two of half-links to compose remaining elementary links
between $\Af \subsetm A$ and $\Af'\subsetm A'$, which is very easy.
If the fiber structure in the middle is not irreducible, 
there is only one way to combine them, 
resulting the elementary link of type II${}_\mathrm{ni}$ in \pref{eq:sarkisovA}.
If the fiber structure in the middle is irreducible,
one has the remaining three ways,
I-I, I-II, and II-II, 
except switching how to set $\Af \subsetm A$ and $\Af'\subsetm A'$,
which results the remaining elementary links 
of type II${}_\mathrm{irr}$, type I${}_\mathrm{m}$, and type IV${}_\mathrm{m}$
in \pref{eq:sarkisovA}, respectively.
This completes the proof.
\end{proof}

\begin{corollary}
   \label{cr:seqlinks}
   For any pair of Mori fiber primitive generating sets 
   $\Af \subsetm A$ and $\Af' \subsetm A'$ in $N_\bR$,
   there exists a sequence of elementary links connecting them.
\end{corollary}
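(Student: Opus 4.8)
The plan is to reduce \pref{cr:seqlinks} to the Sarkisov program for toric Mori fiber spaces, using the two translation results already in place: \pref{lm:lift}, which lifts the combinatorial data to geometry, and \pref{th:sarkisovA}, which reads the combinatorics off a single Sarkisov link. Concretely, I would first realize each of the two given Mori fiber primitive generating sets geometrically, then connect the resulting Mori fiber spaces by a chain of Sarkisov links, and finally push the chain back down to the level of primitive generating sets link by link.

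For the realization step, choose any projective $\Ab$-maximal fan $\Sigmab$ and any projective $\Ab'$-maximal fan $\Sigmab'$. By the third assertion of \pref{lm:lift}, there exist toric Mori fiber spaces $p\colon X_\Sigma \to X_{\Sigmab}$ and $p'\colon X_{\Sigma'} \to X_{\Sigmab'}$ whose associated Mori fiber structures (via \pref{lm:contractions}) are exactly $\Af \subsetm A$ and $\Af' \subsetm A'$, so that $G(\Sigma)=A$ and $G(\Sigma')=A'$ with $\Sigmaf\subset\Sigma$ and $\Sigmaf'\subset\Sigma'$ the corresponding fibre subfans. Since $X_\Sigma$ and $X_{\Sigma'}$ are toric varieties attached to fans in the same lattice $N$, the identity map of $N$ induces a birational map $X_\Sigma \dashrightarrow X_{\Sigma'}$ between toric Mori fiber spaces.

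For the connecting step, I would apply the Sarkisov program (\cite{MR1311348}, \cite{MR3019454}) in the toric category to factor this birational map into a finite chain of Sarkisov links $X_\Sigma = X_0 \dashrightarrow X_1 \dashrightarrow \cdots \dashrightarrow X_k = X_{\Sigma'}$, where each $X_i$ carries a Mori fiber structure $p_i\colon X_i \to S_i$. Because every operation occurring in the toric MMP --- flips, flops, divisorial contractions and Mori fibrations --- is torus-equivariant, each $X_i$ is again a $\bQ$-factorial projective toric variety on the same lattice $N$, hence by \pref{lm:contractions} it determines a Mori fiber primitive generating set $\Af^{(i)} \subsetm A^{(i)}$ in $N_\bR$, with $\Af^{(0)}\subsetm A^{(0)} = \Af\subsetm A$ and $\Af^{(k)}\subsetm A^{(k)} = \Af'\subsetm A'$. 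Applying \pref{th:sarkisovA} to each single Sarkisov link $X_i \dashrightarrow X_{i+1}$ shows that $\Af^{(i)}\subsetm A^{(i)}$ and $\Af^{(i+1)}\subsetm A^{(i+1)}$ either coincide or are related by an elementary link; concatenating these steps yields the desired sequence of elementary links from $\Af\subsetm A$ to $\Af'\subsetm A'$.

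The step I expect to be the main obstacle is the very last point in the connecting step: one must ensure that the Sarkisov program can be run equivariantly, so that the whole chain stays within toric varieties on the fixed lattice $N$ and every intermediate Mori fiber space is genuinely represented by a Mori fiber primitive generating set in $N_\bR$ to which \pref{th:sarkisovA} applies. Equivalently, what is needed is the toric MMP in the $\bQ$-factorial (not necessarily terminal) setting guaranteed by \cite{MR717617} and \cite{MR2810322}, together with the fact that the standard factorization into Sarkisov links respects the torus action. Once this equivariance is granted, the passage through \pref{th:sarkisovA} is purely formal and no further computation is required.
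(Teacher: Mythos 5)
Your proposal is correct and follows essentially the same route as the paper: realize the two Mori fiber primitive generating sets geometrically via \pref{lm:lift}, connect the resulting toric Mori fiber spaces by the Sarkisov program in the toric category (the paper cites \cite[Section 14.5]{MR1875410} and \cite{MR3019454} precisely to guarantee the equivariant, lattice-preserving factorization you flag as the main obstacle), and then coarse-grain each link through \pref{th:sarkisovA}. The only difference is expository detail; the argument is the paper's own.
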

\begin{proof}
   By \pref{lm:lift}, there exists a pair of toric Mori fiber spaces 
   $p: X\rightarrow S$ and $p': X' \rightarrow S'$ which represent 
   $\Af \subsetm A$ and $\Af' \subsetm A'$, respectively.
   The $p$ and $p'$ are connected by a sequence of toric Sarkisov links by the Sarkisov program for toric varieties 
   \cite[Section 14.5]{MR1875410}, \cite{MR3019454}.
   By coarse-graining the resulting sequence of Sarkisov links, one obtains a sequence of elementary links connecting 
   $\Af \subsetm A$ and $\Af' \subsetm A'$ by \pref{th:sarkisovA}.
\end{proof}

We define a \emph{log Sarkisov link} as one of the four types of diagrams in \pref{eq:links}
but each divisorial contraction of $X$'s is not imposed to be $K$-negative.
\begin{corollary}
   \label{cr:sarkisovlift}
   For any sequence of elementary links connecting two Mori fiber primitive generating sets
   $\Af\subsetm A$ and $\Af' \subsetm A'$,
   there exists a sequence of log Sarkisov links connecting two toric Mori fiber spaces 
   $p: X\rightarrow S$ and 
   $p': X'\rightarrow S'$
   which represent
   $\Af\subsetm A$ and $\Af' \subsetm A'$, respectively.
\end{corollary}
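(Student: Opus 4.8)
The plan is to reverse the half-link analysis of \pref{th:sarkisovA}. There we saw how each Sarkisov link \emph{projects} to an elementary link; here I would instead \emph{lift} each elementary link to a log Sarkisov link, using \pref{lm:lift} to realize the three kinds of arrows occurring in a diagram \pref{eq:links}. A dashed arrow between two projective maximal fans of one primitive generating set is produced by the first part of \pref{lm:lift}; a solid divisorial contraction realizing a reduction $A\supsetdot A'$ is produced by the second part, which builds the larger total space over a prescribed maximal fan of the smaller set; and a fibering contraction realizing a Mori fiber structure $\Af\subsetm A$ is produced by the third part, which builds the total space over a prescribed maximal fan of the base. A log Sarkisov link does not require the divisorial contractions between the $X$'s to be $K$-negative, so I never need to control the sign of $K.R$; by \pref{rm:shed} this sign is not determined by the primitive generating sets alone, which is exactly why the conclusion can only be phrased for \emph{log} Sarkisov links.

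First I would lift a single elementary link. Splitting it into the two half-links from which it was assembled in \pref{th:sarkisovA}, I would construct the whole diagram \emph{from the base upward}: fix a projective maximal fan of the lowest base (namely $S'$ for type~I, the common $S$ for type~II, or $R$ for type~IV), then repeatedly apply the third part of \pref{lm:lift} to build up the fibrations and the second part to build up the divisorial enlargements, always in the enlarging direction that \pref{lm:lift} permits. The middle primitive generating set $A''$ thereby acquires two projective maximal fans, one from each half-link, and the first part of \pref{lm:lift} supplies the dashed arrow joining them. This yields a log Sarkisov link of the prescribed type whose two ends are \emph{some} toric Mori fiber spaces representing $\Af\subsetm A$ and $\Af'\subsetm A'$. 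Since the statement fixes neither the endpoints $p,p'$ nor the length of the output sequence, I may lift every link of the given sequence independently in this manner.

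The main obstacle is then \emph{threading the fans at the junctions}: the representative of an intermediate Mori fiber primitive generating set produced as the output of one lifted link need not coincide with the one required as the input of the next, because each was built from its own base. Both represent the same $\Af\subsetm A$, hence share the total primitive generating set $A$ and the base $\Ab$ and differ only in the choices of projective maximal fans for $A$ and for $\Ab$. Everything therefore reduces to the sub-claim that any two toric Mori fiber spaces representing one $\Af\subsetm A$ are joined by a sequence of log Sarkisov links. Since the two total spaces carry the same primitive generating set $A$, the underlying birational map is the identity of $N$, and one clean way to finish is to invoke the toric Sarkisov program exactly as in the proof of \pref{cr:seqlinks}, which connects these two Mori fiber spaces by Sarkisov, hence log Sarkisov, links. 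To stay within \pref{lm:lift} one can instead argue directly: the first part of \pref{lm:lift} already relates the two total spaces by flips, flops and inverse flips, and the point requiring genuine care is to repackage these wall-crossings, together with the two fibration structures, as type~IV${}_\mathrm{s}$ links; this calls for a relative version, over the base, of the movable-cone description underlying \pref{lm:lift}, ensuring that a single wall-crossing of the base fan moves the total-space fan by flips and flops and nothing else. Concatenating the independently lifted links with these junction bridges then produces the desired sequence of log Sarkisov links.
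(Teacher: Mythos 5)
Your proposal is correct, and its lifting step is exactly the paper's: the paper also constructs a log Sarkisov link for each elementary link ``from the bottom of its diagram'' by applying \pref{lm:lift} repeatedly (after rewriting the links as in \pref{eq:sarkisovA2} in terms of their bases), with the first part of \pref{lm:lift} supplying the dashed arrow between the two projective maximal fans acquired by the middle set. Where you genuinely diverge is at the junctions. The paper does not invoke the Sarkisov program there; it stays entirely inside \pref{lm:lift} by decomposing the \emph{base} map first: for two representatives of one $\Af\subsetm A$, the map $\varphi_\mathrm{b}\colon S\dashrightarrow S'$ induced by the identity of $\Nb$ is decomposed into flips, flops and inverse flips (first part of \pref{lm:lift} applied to $\Ab$), each intermediate base model is lifted to a Mori fiber space representing $\Af\subsetm A$ (third part), and then each \emph{single} base wall-crossing $S\rightarrow R\leftarrow S''$ already constitutes one type~IV${}_\mathrm{s}$ log Sarkisov link, because the dashed arrow upstairs is permitted to be an arbitrary sequence of flips/flops/inverse flips --- which the first part of \pref{lm:lift} applied to $A$ provides --- and commutativity is automatic since all maps are induced by the identity of $N$. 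This shows that the ``relative movable-cone'' statement you call for in your second option is not needed: one never has to match a single base wall-crossing with a single total-space wall-crossing, only to absorb all total-space wall-crossings into one dashed arrow. Your first option --- bridging the junctions by the toric Sarkisov program as in \pref{cr:seqlinks} --- does prove the statement as written, since genuine Sarkisov links are in particular log Sarkisov links; but it is a strictly heavier tool and surrenders control over the bridge links, which may be of any type and pass through unrelated primitive generating sets, whereas the paper's bridges are all of type~IV${}_\mathrm{s}$ and contain no divisorial contractions, keeping the whole output sequence aligned with the given sequence of elementary links (which is what makes the deduction of \pref{cr:polytopelinks} via \pref{rm:shed} immediate).
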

\begin{proof}
Assume $\Af =\Af'$ and $A=A'$ first. 
There is a natural birational map
$\varphi_\mathrm{b}: S\dashrightarrow S'$ induced by the identity map of $\Nb$.
By the first statement of \pref{lm:lift},
$\varphi_\mathrm{b}$ is decomposed into flips, flops, and inverse flips.
Since each intermediate birational model $S''$ also represents the base $\Ab$ of $\Af\subsetm A$,
there exists a toric Mori fiber space $p'': X''\rightarrow S''$ which represent 
$\Af\subsetm A$ by the third statement of \pref{lm:lift}.
Hence, by replacing $p'$ with $p''$, 
one may assume $\varphi_\mathrm{b}$ is either a single flip, flop, or inverse flip.
In either case, it fits a log Sarkisov link of type IV${}_\mathrm{s}$ because
the natural map $\varphi: X\dashrightarrow X'$ is decomposed into flips, flops, and inverse flips
and $\varphi_\mathrm{b}\circ p = p' \circ \varphi$ holds.

Next, assume that
$\Af\subsetm A$ and $\Af' \subsetm A'$ are related by a single elementary link.
As in the first case, 
one can make a log Sarkisov link from the bottom of its diagram 
for each elementary link by applying \pref{lm:lift} repeatedly. For this purpose, 
it is helpful to rewrite the elementary links in \pref{eq:sarkisovA} into 
alternative descriptions by using the bases of fiber structures:
\begin{equation}
  \label{eq:sarkisovA2}
   \arraycolsep=9pt\def\arraystretch{1.3}
  \begin{array}{cccc}
   \text{type I}{}_\mathrm{d} & \text{type I}{}_\mathrm{m} & \text{type II}{}_{\mathrm{irr}} 
   \text{/II}{}_{\mathrm{ni}}  &  \text{type IV}{}_ {\mathrm{m}}\\
  \begin{tikzcd}[row sep = 15pt, column sep=-3pt]
    A \arrow[d,dashrightarrow] & \supsetdot &  A'\arrow[d,dashrightarrow] \\
    \Ab  & \supsetdot & \Ab'
  \end{tikzcd} &
  \begin{tikzcd}[row sep = 15pt, column sep=-7pt]
    A \arrow[d,dashrightarrow]& = &  A'' \arrow[d,dashrightarrow]& \supsetdot & A'\arrow[d,dashrightarrow]\\
    \Ab
    &\dashrightarrow & \Ab'' & \begin{minipage}{20pt}\begin{center}=\end{center}\end{minipage} & \Ab'
  \end{tikzcd} & 
  \begin{tikzcd}[row sep = 15pt, column sep=-3pt]
    A \arrow[d,dashrightarrow]& \subsetdot &  A'' \arrow[d,dashrightarrow]& \supsetdot & A'\arrow[d,dashrightarrow]\\
    \Ab
    &=& \Ab''
    & = & \Ab'
  \end{tikzcd} & 
  \begin{tikzcd}[row sep = 15pt, column sep=-7pt]
    A \arrow[d,dashrightarrow]& = &  A'' \arrow[d,dashrightarrow]& = & A'\arrow[d,dashrightarrow]\\
    \Ab &\dashrightarrow & \Ab''
    &\dashleftarrow & \Ab'
  \end{tikzcd}
  \end{array}
\end{equation}

Finally, in general case, the log Sarkisov links obtained from each elementary link
are glued together by the sequence of log Sarkisov links of type IV${}_\mathrm{s}$ in the first case, 
so that one obtains a sequence of log Sarkisov links connecting $p: X \rightarrow S$ and $p':X'\rightarrow S'$.
\end{proof}

\begin{corollary}
   \label{cr:polytopelinks}
   For any sequence of elementary links connecting two Mori fiber polytopes
   $\nablaf\subsetm \nabla$ and $\nablaf' \subsetm \nabla'$
   consisting of only Fano polytopes, 
   there exists a sequence of Sarkisov links connecting two toric Mori fiber spaces 
   $p: X\rightarrow S$ and 
   $p': X'\rightarrow S'$
   which represent
   $\nablaf\subsetm \nabla$ and $\nablaf' \subsetm \nabla'$, respectively.
\end{corollary}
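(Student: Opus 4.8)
The plan is to obtain the required sequence of Sarkisov links from \pref{cr:sarkisovlift} and then upgrade each of its log Sarkisov links to a genuine one. Under the abuse of notation identifying a Fano polytope $\nabla$ with its primitive generating set $\nabla\cap\Nprim$, the given sequence of elementary links between the Mori fiber polytopes $\nablaf\subsetm\nabla$ and $\nablaf'\subsetm\nabla'$ is in particular a sequence of elementary links between the associated Mori fiber primitive generating sets. \pref{cr:sarkisovlift} then produces a sequence of log Sarkisov links connecting representatives $p: X\rightarrow S$ and $p': X'\rightarrow S'$ of $\nablaf\subsetm\nabla$ and $\nablaf'\subsetm\nabla'$. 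Since a log Sarkisov link differs from a Sarkisov link only in that its divisorial contractions of $X$'s are not required to be $K$-negative, it suffices to verify that every such contraction occurring in our links is in fact $K$-negative.

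First I would pin down which contractions have to be checked. By the construction in \pref{cr:sarkisovlift}, carried out from the bottom of each diagram via \pref{lm:lift}, the divisorial contractions of $X$'s correspond exactly to the reductions appearing in the top rows of the elementary links in \pref{eq:sarkisovA2}: the reduction $A\supsetdot A'$ for type I${}_\mathrm{d}$, the reduction $A''\supsetdot A'$ for type I${}_\mathrm{m}$, the two reductions $A''\supsetdot A$ and $A''\supsetdot A'$ for type II${}_{\mathrm{irr}}$ and type II${}_{\mathrm{ni}}$, and none at all for type IV${}_\mathrm{m}$ or the trivial type IV${}_\mathrm{s}$. The reductions and fibering maps appearing in the bottom rows realize the contractions of $S$'s, on which the definition of a Sarkisov link imposes no sign condition, so these require no attention.

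The key step is the $K$-negativity itself, and here the Fano hypothesis enters decisively. Because the whole sequence consists only of Fano polytopes, each top-row symbol $A$, $A''$, $A'$ is the primitive generating set $\nabla\cap\Nprim$ of a Fano polytope $\nabla$, so every top-row reduction is a reduction of Fano polytopes. \pref{rm:shed} then applies verbatim: for a reduction $\nabla\supsetdot\nabla'$ of Fano polytopes, any divisorial contraction $\varphi_R: X_\Sigma\rightarrow X_{\Sigma'}$ with $\Conv G(\Sigma)=\nabla$ and $\Conv G(\Sigma')=\nabla'$ satisfies $K_\Sigma.R<0$, since $\shed(\Sigma')\subset\nabla'$ cannot contain the contracted vertex in $\nabla\setminus\nabla'$. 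Hence every divisorial contraction of $X$'s in our log Sarkisov links is $K$-negative, each log Sarkisov link is a genuine Sarkisov link, and the sequence produced by \pref{cr:sarkisovlift} is the required sequence of Sarkisov links.

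I expect the only real obstacle to be bookkeeping rather than a substantive difficulty: one has to confirm that the passage from the polytope-level elementary links to the primitive-generating-set level is faithful — that each reduction $\nabla\supsetdot\nabla'$ of Fano polytopes restricts to a reduction $\nabla\cap\Nprim\supsetdot\nabla'\cap\Nprim$ (the only lost primitive point being the contracted vertex), and that the terminal simplices $\nablaf$ give Mori fiber structures on the corresponding primitive generating sets — and that the identification of top-row reductions with divisorial contractions of $X$'s is set up consistently with the construction underlying \pref{cr:sarkisovlift}. Once this dictionary is fixed, the Fano hypothesis feeds straight into \pref{rm:shed} and the upgrade from log Sarkisov links to Sarkisov links is immediate.
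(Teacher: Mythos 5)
Your proposal is correct and follows exactly the paper's own argument: the paper proves this corollary by citing \pref{cr:sarkisovlift} together with the last sentence of \pref{rm:shed}, which is precisely your route of first producing log Sarkisov links and then using the Fano hypothesis to force $K$-negativity of every divisorial contraction of the $X$'s. Your expanded bookkeeping (identifying the top-row reductions as the only contractions needing the sign check) is a faithful unpacking of what the paper leaves implicit in the word ``immediate.''
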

\begin{proof}
This is an immediate consequence of \pref{cr:sarkisovlift}
and the last sentence of \pref{rm:shed}.
\end{proof}

In order to establish the whole picture 
only in terms of Fano polytopes,
the following two problems could be keys, 
the latter of which is regarded as the Fano polytope version of \pref{cr:seqlinks}.  
We say that a Fano polytope $\nabla$ is \emph{minimal} if there is no reduction $\nabla \supsetdot \nabla'$.
\begin{problem}
   \label{pb:polytopeMMP}
   For any minimal Fano polytope $\nabla$,  is there a Mori fiber structure $\nablaf\subsetm \nabla$?
\end{problem}
\begin{problem}
   \label{pb:mfpconnectedness}
   For any pair of Mori fiber polytopes
   $\nablaf \subsetm \nabla$ and $\nablaf' \subsetm \nabla'$ in $N_\bR$,
   is there a sequence of elementary links consisting only of Fano polytopes 
   which connects $\nablaf\subsetm \nabla$ and $\nablaf' \subsetm \nabla'$?
\end{problem}

\begin{remark}
\pref{pb:polytopeMMP} is required to guarantee that the MMP 
works only inside the category of Fano polytopes.
Namely, it assures that
one ends up with a Mori fiber polytope
after finite times of reductions of a given Fano polytope.
Note that, by triangulating a minimal Fano polytope $\nabla$ using only its vertices,
one can always find a fiber structure $\nablaf \subset \nabla$ with 
a minimal Fano simplex $\nablaf$ (see \cite[Proposition 3.2]{MR2760660}, and also \cite[Lemma 1]{MR1901220} 
for the structure of minimal primitive generating sets).
Hence,  one part of the problem is whether 
a minimal Fano simplex $\nablaf$ allows a fiber structure 
$\nablaf'\subset \nablaf$ such that $\nablaf'$ is a terminal simplex.
\end{remark}

The following example shows the subtleties of \pref{pb:mfpconnectedness} in three or higher dimensions.
The obstacle stems from the fact that, for a relevant Fano polytope $\nabla$, 
a projective simplicial terminal fan $\Sigma$ satisfying $\Conv G(\Sigma)=\nabla$
needs not to be a projective $\nabla$-maximal fan.

\begin{example}
\label{eg:obstacle}
Set
$v_1 = (1,0,0)$,
$v_2 = (0,1,0)$,
$v_3 = (-1,-1,0)$,
$v_4 = (-1,0,0)$,
$v_5 = (0,0,1)$, 
$v_6 = (-1,0,-1)$, and
$v_7= (-2,0,-1)$
in $N\simeq \bZ^3$.
As an abbreviation, we write 
Fano polytopes
$\nabla_{i_1\cdots i_r} = \Conv(v_{i_1}, \dots, v_{i_r})$
and 
primitive generating sets
$A_{i_1\cdots i_r} = \lc v_{i_1}, \dots, v_{i_r} \rc$
with concatenated subscripts for any indices $i_1, \dots, i_r$.
In the following,
the latter symbol is used if and only 
if $A_{i_1\cdots i_r} \ne \nabla_{i_1\cdots i_r}\cap \Nprim$.
Then we have a sequence of elementary links
connecting a pair of Mori fiber polytopes $\nabla_{14}\subsetm \nabla_{123457}$
and $\nabla_{123}\subsetm \nabla_{12356}$,
\begin{equation}
   \label{eq:fano}
   \begin{tikzcd}[row sep = small, column sep=-5pt]
    \nabla_{123457} & = & \nabla_{123457} & \supsetdot & A_{12357} & \subsetdot
    &A_{123567} & \supsetdot & \nabla_{12356}\\
    \nabla_{14} \arrow[u, phantom, sloped, "\subsetm"]  
    & \subsetm & \nabla_{1234} \arrow[u, phantom, sloped, "\subset"]  
    & \supsetdot & \nabla_{123} \arrow[u, phantom, sloped, "\subsetm"]  
    & = & \nabla_{123} \arrow[u, phantom, sloped, "\subset"]
    & = & \nabla_{123} \arrow[u, phantom, sloped, "\subsetm"]
  \end{tikzcd}.
\end{equation}
The sequence  \pref{eq:fano} corresponds to that of Sarkisov links between toric Mori fiber spaces, 
which are all projective bundles (cf.\ \cite[Proposition 7.3.3]{MR2810322}),
\begin{equation}
   \begin{tikzcd}
   \bP_{\bP^1\times \bP^1}\lb \cO \oplus \cO(-1,-2) \rb \arrow[r,dashrightarrow,"\text{I}_{\mathrm{d}}"] &
   \bP_{\bP^1}\lb \cO \oplus \cO \oplus  \cO(-2) \rb \arrow[r,dashrightarrow,"\text{II}_{\mathrm{ni}}"] &
   \bP_{\bP^1}\lb \cO \oplus \cO \oplus  \cO(-1) \rb,
   \end{tikzcd}
\end{equation}
where we put the types of the Sarkisov links on dashed arrows.
Note that the coarse-graining of the sequence \pref{eq:fano} into Fano polytopes does not attain
a sequence of elementary links.
On the other hand, one can also take another sequence of elementary links 
connecting the same pair of Mori fiber polytopes
which consists only of Fano polytopes:
\begin{equation}
   \label{eq:fano2}
   \begin{tikzcd}[row sep = small, column sep=-5pt]
    \nabla_{123457} & \subsetdot & \nabla_{1234567} & \supsetdot & \nabla_{123456} & =
    &\nabla_{123456} & \supsetdot & \nabla_{12356}\\
    \nabla_{14} \arrow[u, phantom, sloped, "\subsetm"]  
    & = & \nabla_{14} \arrow[u, phantom, sloped, "\subset"]  
    & = & \nabla_{14} \arrow[u, phantom, sloped, "\subsetm"]  
    & \subsetm & \nabla_{1234} \arrow[u, phantom, sloped, "\subset"]
    & \supsetdot & \nabla_{123} \arrow[u, phantom, sloped, "\subsetm"]
  \end{tikzcd}.
\end{equation}
The sequence \pref{eq:fano2} corresponds to the following sequence of Sarkisov links:
\begin{equation}
   \begin{tikzcd}
   \bP_{\bP^1\times \bP^1}\lb \cO \oplus \cO(-1,-2) \rb \arrow[r,dashrightarrow,"\text{II}_{\mathrm{ni}}"] &
   \bP_{\bP^1\times \bP^1}\lb \cO \oplus  \cO(-1, -1) \rb \arrow[r,dashrightarrow,"\text{I}_{\mathrm{d}}"] &
   \bP_{\bP^1}\lb \cO \oplus \cO \oplus  \cO(-1) \rb.
   \end{tikzcd}
\end{equation}
\end{example}

Now, let us see that the situation becomes better
in two dimensions
once we use the classification of 
the Sarkisov links for smooth rational Mori fiber surfaces:
\begin{equation}
  \label{eq:2dimlinks}
   \arraycolsep=8pt \def\arraystretch{1.2}
  \begin{array}{cccc}
   \text{type I}_\mathrm{m} & \text{type II}_\mathrm{ni} & \text{type III}_\mathrm{m} & 
   \text{type IV}_\mathrm{m} \vspace{5pt}\\
  \begin{tikzcd}[column sep= 20pt, row sep=20pt]
    \bF_1 \arrow[d] \arrow[dr]\\
    \bP^1 \arrow[dr] & \bP^2 \arrow[d]\\
    & pt
  \end{tikzcd} &
  \begin{tikzcd}[column sep=10pt, row sep=20pt]
    & S_x \arrow[ld, "\mathrm{bl}_x"'] \arrow[dr]&\\
    \bF_m \arrow[dr] && \bF_{m\pm 1} \arrow[dl]\\
    & \bP_1&
  \end{tikzcd} &
  \begin{tikzcd}[column sep=20pt, row sep=20pt]
    & \bF_1 \arrow[d] \arrow[dl]\\
    \bP^2 \arrow[d] & \bP^1 \arrow[dl]\\
    pt&
  \end{tikzcd} &
  \begin{tikzcd}[column sep=0pt, row sep=20pt]
    &\bP^1 \times \bP^1 \arrow[dl, "p_1"'] \arrow[dr, "p_2"]&\\
    \bP^1 \arrow[dr] && \bP^1 \arrow[ld]\\
    & pt&
  \end{tikzcd}
  \end{array}
\end{equation}
Here $\bF_m = \bP_{\bP^1}\lb \cO \oplus \cO(-m) \rb$ is the Hirzebruch surface for $m\ge 0$, 
$p_i: \bF_0 = \bP^1\times \bP^1\rightarrow \bP^1$ is the $i$-th projection ($i=1,2$),
and $\mathrm{bl}_x: S_x\rightarrow \bF_m$ 
is a blow-up at a point $x\in \bF_m$. 
The Sarkisov link of type II${}_\mathrm{ni}$ in \pref{eq:2dimlinks} is classically known as an 
\emph{elementary transform}, that is, 
a blow-up at a point $x\in \bF_m$ followed by the contraction of the strict transform of the fiber 
passing through $x$
with respect to a ruling $\bF_m\rightarrow \bP^1$ \cite[Example 5.7.1]{MR0463157}. 
We denote it by
\begin{equation}
   \label{eq:elem}
   \el_x: \bF_m \dashrightarrow \bF_{m\pm 1}
\end{equation} 
in the following.
Note that, for $m\ge 1$, 
the elementary transform at $x\in \bF_m$ results $\bF_{m+1}$ or $\bF_{m-1}$
depending on whether $x$ lies on the section $D\subset \bF_m$ with negative self-intersection, $D^2=-m$.

\begin{proposition}
   \label{pr:twodim}
   \pref{pb:polytopeMMP} and 
   \pref{pb:mfpconnectedness} are affirmative in two dimensions.
\end{proposition}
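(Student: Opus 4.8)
The plan is to exploit a simplification special to dimension two: a complete simplicial fan in $N_\bR\simeq\bR^2$ is determined by its set of rays, so for every primitive generating set $A$ there is a \emph{unique} projective $A$-maximal fan, obtained by joining the points of $A$ in cyclic order. Consequently the obstruction of \pref{eg:obstacle}—a terminal fan failing to be maximal—disappears, and for a Fano polygon $\nabla$ the maximal fan $\Sigma$ satisfies $G(\Sigma)=\nabla\cap\Nprim$. I treat the two problems in turn.

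For \pref{pb:polytopeMMP} I would argue with the toric MMP rather than by classification. Let $\nabla$ be a minimal Fano polygon and $X_\Sigma$ the surface of its maximal fan; being a $\bQ$-factorial projective toric surface, $X_\Sigma$ admits a $K$-negative extremal contraction, which is divisorial or fibering (surfaces have no small contractions). I claim no $K$-negative \emph{divisorial} contraction exists. Such a contraction removes a single ray $v$ by \pref{lm:contractions}, and by \pref{rm:shed} it is $K$-negative precisely when $v\notin\Conv\lb\lc 0,v_-,v_+\rc\rb$, where $v_-,v_+$ are the rays adjacent to $v$ in cyclic order. If $v$ is a vertex of $\nabla$, this removal is exactly a reduction $\nabla\supsetdot\nabla'$ (the last sentence of \pref{rm:shed} gives $K$-negativity), contradicting minimality; if $v$ is interior to an edge then $v\in[v_-,v_+]$, and if $v$ is interior to $\nabla$ a short convexity argument shows that $v_-,v_+$ are joined by a single edge of $\nabla$—in both cases $v\in\Conv\lb\lc 0,v_-,v_+\rc\rb$, contradicting $K$-negativity. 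Hence the extremal contraction is fibering, and by \pref{lm:contractions} it yields a Mori fiber structure $G(\Sigmaf)\subsetm G(\Sigma)=\nabla\cap\Nprim$. This is a Mori fiber structure \emph{of Fano polygons}: the fiber is either a segment $\lc u,-u\rc$ (a terminal $1$-simplex, for a ruling) or, when the base is a point, all of $\nabla$, and a Fano triangle with exactly three primitive points is automatically terminal.

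For \pref{pb:mfpconnectedness} I would first pin down the Mori fiber polygons. Irreducibility forces the base to be $\lc 1,-1\rc\subset\Nb\simeq\bZ$, so a $\dim L=1$ Mori fiber polygon has exactly four primitive points $\lc u,-u,w_1,w_2\rc$; normalizing $u=(0,1)$, a lattice-point count on the vertical slice over each base point forces $\pi(w_i)=\pm1$, whence $X_\Sigma$ is a smooth Hirzebruch surface $\bF_m$, while the $\dim L=2$ case is a terminal triangle $\bP^2$. Thus every Mori fiber polygon represents one of the smooth surfaces of \pref{eq:2dimlinks}, and I would connect them using exactly those links: an elementary transform \pref{eq:elem} of type $\mathrm{II}_{\mathrm{ni}}$ lowers the Hirzebruch number $\bF_m\dashrightarrow\bF_{m-1}$ and the type $\mathrm{I}_\mathrm{m}$ link realizes $\bF_1\dashrightarrow\bP^2$, so every $\bF_m$ is linked to a $\bP^2$. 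Coarse-graining each such smooth Sarkisov link via \pref{th:sarkisovA} produces an elementary link of the associated Fano polygons, and one verifies that the intermediate primitive generating set $A''$—the fan of the blow-up $S_x$—again satisfies $A''=\Conv A''\cap\Nprim$; this rests on the fact that a toric blow-up of a smooth surface at a fixed point adjoins the primitive ray $v_-+v_+$ and introduces no new lattice points.

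The step I expect to be the main obstacle is the \emph{global} connectivity in \pref{pb:mfpconnectedness}: linking the infinitely many distinct lattice realizations of $\bP^2$ and of each $\bF_m$, since we may not work modulo unimodular equivalence. The plan is to fix the standard $\bP^2=\Conv\lb\lc(1,0),(0,1),(-1,-1)\rc\rb$ as a basepoint and show that every Mori fiber polygon reaches it: repeated elementary transforms drive the Hirzebruch number down, while the freedom to choose the elementary-transform centre $x$ among the torus-fixed points at each stage supplies the moves realizing a unimodular change of the ambient lattice. Making precise that the groupoid generated by the links of \pref{eq:2dimlinks} acts transitively on all lattice realizations is the delicate point; I would track the two section classes of the ruling under $\el_x$ and check that the induced transformations of the base $\bP^1$ together with the change of twisting generate everything needed.
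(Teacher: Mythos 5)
Your MMP framing of \pref{pb:polytopeMMP} is reasonable, and your criterion extracted from \pref{rm:shed} (removal of the ray $v$ is $K$-negative exactly when $v\notin\Conv(0,v_-,v_+)$, with $v_-,v_+$ the cyclically adjacent rays) is correct; but the two non-vertex cases of your case analysis are false as stated, because you tacitly assume that the rays adjacent to $v$ in the maximal fan lie on the boundary of $\nabla$ near $v$, whereas they may be primitive points \emph{interior} to $\nabla$. Concretely, let $\nabla=\Conv((1,2),(4,-1),(0,-1),(-1,0))$. The point $v=(2,1)$ lies in the interior of the edge joining $(1,2)$ to $(4,-1)$, yet its cyclic neighbours in $\nabla\cap\Nprim$ are the interior points $v_-=(1,0)$ and $v_+=(1,1)$, so $v\notin[v_-,v_+]$ and indeed $v\notin\Conv(0,v_-,v_+)$; removing this ray contracts a $(-1)$-curve, i.e.\ it is a $K$-negative extremal divisorial contraction which is \emph{not} a reduction of the polygon. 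This $\nabla$ is not minimal, so the conclusion you want is not contradicted, but your argument in the edge-interior and interior cases never invokes minimality, so it cannot be repaired by inspection: the needed statement is genuinely a consequence of minimality, not of convexity. A correct route is to observe that minimality is equivalent to the condition that for every vertex the two adjacent angular gaps in $\nabla\cap\Nprim$ sum to at least $\pi$ (removing a vertex leaves a generating set iff the merged gap is less than $\pi$); summing these inequalities forces $\abs{\nabla\cap\Nprim}\le 4$ and pins down the configurations (an empty triangle, a set $\lc \pm w_1,\pm w_2\rc$, or $\lc u,-u,w_1,w_2\rc$ with $w_1,w_2$ on opposite sides), from which the Mori fiber structure is read off. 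Some such classification is exactly the ``easy verification'' behind the paper's proof, and it is the step missing from yours.

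For \pref{pb:mfpconnectedness}, your identification of Mori fiber polygons with lattice realizations of $\bP^2$ and $\bF_m$ (the slice count forcing $\pi(w_i)=\pm 1$) and your observation that a smooth toric blow-up adjoins $v_-+v_+$ without creating new lattice points both agree with the paper. However, what you defer as ``the delicate point''---connecting the infinitely many lattice realizations without passing to unimodular equivalence---is precisely the content of the problem, so as it stands the second half of the proposition is unproved. The paper does not generate $GL(2,\bZ)$ by hand at this stage: the two given Mori fiber polygons determine two smooth toric Mori fiber surfaces sitting in the \emph{same} lattice $N$, and the toric two-dimensional Sarkisov program (exactly as invoked in \pref{cr:seqlinks}) connects these two concrete fibrations by a sequence of toric links, which are necessarily of the smooth types in \pref{eq:2dimlinks} with torus-invariant centres; the only verification left is that each such link coarse-grains, via \pref{th:sarkisovA}, to an elementary link whose intermediate object is again a Fano polygon---your blow-up observation. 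In other words, connectivity should be imported from the Sarkisov program applied to the two given fans, not rebuilt by tracking section classes. The constructive generation of unimodular transformations that you sketch is feasible, but it is real work and is carried out in the paper only later, as \pref{lm:unimodular} in the proof of \pref{th:polygon}, not as a remark one can wave at.
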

\begin{proof}
It is easy to verify that
all smooth toric Mori fiber surfaces $\bP^2$ and $\bF_m$ ($m\ge 0$) are described by Mori fiber polygons,
and
all toric Sarkisov links in \pref{eq:2dimlinks}
are described by elementary links consisting only of Fano polygons (see \pref{sc:reflexive} for concrete forms).
\end{proof}

\section{Two Proofs of \pref{th:polygon}}
\label{sc:reflexive}
In this section, 
we give a constructive proof of \pref{th:polygon} first, and then a non-constructive proof of it.
The latter proof uses a stronger claim (\pref{lm:smallseq}), which also implies 
the connectedness of the web of elliptic elephants in smooth projective rational surfaces 
(\pref{cr:ellip}).

\begin{proof}[{Constructive proof of \pref{th:polygon}}]
   Recall that reflexive is equivalent to canonical for lattice polygons. 
   By repeating reductions for a canonical polygon, 
   one ends up with a canonical Mori fiber polygon by \pref{pr:twodim}.
   Then it suffices to show that any pair of canonical Mori fiber polygons 
   is connected via a sequence of elementary links whose intermediate polygons are all canonical.
   The same is true for terminal polygons.
   Up to unimodular equivalences, there are only 
   three terminal Mori fiber polygons $\nabla_{-\infty}$, $\nabla_0$ and $\nabla_1$ and 
   four canonical Mori fiber polygons $\nabla_{-\infty}$, $\nabla_0$, $\nabla_1$ and $\nabla_2$.
   Here, for any integer $m\ge 0$
   and a fixed lattice basis $\lc e_1, e_2 \rc$ of $N\simeq \bZ^2$, we set
   \begin{equation}
      \label{eq:polygons}
      \nabla_{-\infty} \coloneqq \Conv (e_1, e_2, -e_1-e_2) \quad 
      \text{and} \quad \nabla_m\coloneqq \Conv(e_1, e_2, -e_1, -m e_1-e_2)  
   \end{equation}
   corresponding to $\bP^2$ and $\bF_m$, respectively.
   There is an elementary link of type II${}_\mathrm{ni}$ connecting $\nabla_{m}$ and $\nabla_{m+1}$,
   \begin{equation}
      \label{eq:linkm}
      \begin{tikzcd}[column sep = -7pt, row sep = small]
         \nabla_{m} & \subsetdot \  & \Conv(\nabla_{m}\cup \nabla_{m+1}) & \  \supsetdot & \nabla_{m+1}\\
         \Conv(\pm e_1)  \arrow[u, phantom, sloped, "\subsetm"] & = & \Conv(\pm e_1)  \arrow[u, phantom, sloped, "\subset"] 
         & = &\Conv(\pm e_1) \arrow[u, phantom, sloped, "\subsetm"].
      \end{tikzcd}
   \end{equation}
   The elementary link \pref{eq:linkm}
   and its inverse are denoted by $\ell_m^{+}$ and $\ell_m^{-}$, respectively.
   It is easy to observe that all polygons in $\ell_0^{\pm}$ are terminal and 
   all polygons in $\ell_1^{\pm}$ are canonical.
   By \pref{lm:unimodular} below,
   we may assume each Mori fiber polygon is moved to standard form \pref{eq:polygons} in advance. 
   By using $\ell_m^\pm$ and another elementary link $\ell_{-\infty}^+$ of type III${}_\mathrm{m}$,
   \begin{equation}
      \label{eq:link0}
      \begin{tikzcd}[column sep = -7pt, row sep = small]
         \nabla_{-\infty} & \subsetdot \  & \nabla_{1} & \  = & \nabla_{1}\\
         \nabla_{-\infty}  \arrow[u, phantom, sloped, "\subsetm"] & \subsetdot & \nabla_{1}  \arrow[u, phantom, sloped, "\subset"] 
         & \supsetm &\Conv(\pm e_1) \arrow[u, phantom, sloped, "\subsetm"],
      \end{tikzcd}
   \end{equation}
   with its inverse $\ell_{-\infty}^-$,
   each pair of terminal Mori fiber polygons from $\lc \nabla_{-\infty}, \nabla_0, \nabla_1 \rc$ 
   (resp.\ canonical Mori fiber polygons from $\lc \nabla_{-\infty}, \nabla_0, \nabla_1, \nabla_2 \rc$)
   can be connected
   via a sequence of elementary links whose intermediate polygons are all terminal (resp.\ canonical).
   This completes the proof.
\end{proof}

\begin{lemma}
   \label{lm:unimodular}
   With the same notation as above,
   any unimodular transformation of $\nabla_{-\infty}$ (resp.\ $\nabla_m$ for $m\ge 0$) is decomposed into
   a sequence of elementary links unimodular equivalent to $\ell^\pm$, $\ell_{-\infty}^\pm$ and $\ell_0^\pm$ (resp.\ 
   $\ell^{\pm}$, $\ell_j^\pm$ with $0\le j\le \max\lc 1,m-1 \rc$), where $\ell^+$ is the following 
   elementary link of type IV${}_\mathrm{m}$,
   \begin{equation}
      \begin{tikzcd}[column sep = -7pt, row sep = small]
         \nabla_0 & = \  & \nabla_0 & \  = & \nabla_0\\
         \Conv(\pm e_1)  \arrow[u, phantom, sloped, "\subsetm"] & \subsetm & \nabla_0  
         \arrow[u, phantom, sloped, "\subset"] 
         & \supsetm &\Conv(\pm e_2) \arrow[u, phantom, sloped, "\subsetm"],
      \end{tikzcd}
   \end{equation}
   and $\ell^-$ is its inverse.
   In particular, any terminal (resp.\ canonical) Mori fiber polygons is connected to 
   that in standard form \pref{eq:polygons}
   through a sequence of elementary links whose intermediate polygons are all terminal (resp.\ canonical).
\end{lemma}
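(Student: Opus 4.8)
The plan is to recast the statement group-theoretically. Fix one of the standard Mori fiber polygons $P$ from \pref{eq:polygons} and let $H_P\subset\mathrm{GL}_2(\bZ)$ be the set of $g$ for which $g\cdot P$ is joined to $P$ by a sequence of elementary links all of whose polygons lie in the admissible list $\nabla_{-\infty},\nabla_0,\dots,\nabla_m$ (and, for the \emph{in particular}, are terminal resp.\ canonical). Because elementary links are preserved by the diagonal action of $\mathrm{GL}_2(\bZ)$ and unimodular maps preserve terminality and canonicity, applying $g$ to an admissible sequence for $h$ gives $gh\in H_P$, reversing a sequence gives $g^{-1}\in H_P$, and the empty sequence gives $\mathrm{id}\in H_P$; hence $H_P$ is a subgroup. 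By \pref{pr:twodim} every terminal (resp.\ canonical) Mori fiber polygon is unimodularly equivalent to some standard $P$, so it suffices to prove $H_P=\mathrm{GL}_2(\bZ)$, and for that it is enough to place a generating set of $\mathrm{GL}_2(\bZ)$ inside $H_P$. The case $P=\nabla_{-\infty}$ moreover reduces to the Hirzebruch cases through the links $\ell_{-\infty}^{\pm}$ of \pref{eq:link0}: once $H_{\nabla_1}=\mathrm{GL}_2(\bZ)$ is known, any $g\cdot\nabla_{-\infty}$ reaches $\nabla_{-\infty}$ via $g\cdot\nabla_1$ and $\nabla_1$, so it is enough to treat $P=\nabla_m$.

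I will take as generators of $\mathrm{GL}_2(\bZ)$ a reflection $\mathrm{diag}(1,-1)$, the coordinate swap $e_1\leftrightarrow e_2$, and the shear $e_2\mapsto e_2+e_1$ fixing $e_1$. The finite part is almost free: any unimodular map fixing $P$ literally fixes it, so the full stabilizer of $P$ sits in $H_P$ via the empty sequence; for $P=(\Conv(\pm e_1)\subsetm\nabla_0)$ this stabilizer already contains the reflection $\mathrm{diag}(1,-1)$. The one finite generator outside the stabilizer is the swap exchanging the two rulings of $\nabla_0$, and this is precisely the monodromy of the type IV${}_\mathrm{m}$ link $\ell^{\pm}$; hence the reflection and the swap both lie in $H_P$.

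The crux is the infinite-order shear, which fixes the fiber direction $e_1$ and only reframes the base, so that its powers carry $\nabla_m$ to a Mori fiber polygon for the same ruling $\Conv(\pm e_1)$ with sheared vertices. I would realize it as the monodromy of a round trip of elementary transforms in which the two blow-up points are placed at distinct torus-fixed points, so that the composite is a nontrivial monomial automorphism of $\bF_m$ preserving the ruling rather than the identity; this is the toric shadow of the classical fact that elementary transformations generate the ruling-preserving birational maps of a geometrically ruled surface. The essential constraint is that no intermediate polygon may climb above $\nabla_m$: one uses only the links $\ell_j^{\pm}$ with $0\le j\le\max\{1,m-1\}$ of \pref{eq:linkm}, so the whole round trip stays inside $\nabla_0,\dots,\nabla_m$ (together with $\nabla_{-\infty}$ when $m=1$). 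For terminal polygons this range is just $\nabla_{-\infty},\nabla_0,\nabla_1$ and for canonical polygons it adds $\nabla_2$, which is exactly why the admissible links in the statement never reach beyond $\max\{1,m-1\}$.

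I expect this confinement to be the main obstacle. Since an arbitrary shear has unbounded height it cannot arise from a single elementary transform, so the work is to exhibit a few concrete admissible loops---most economically $\ell_0^{+}$ followed by a reflected descent back to $\nabla_0$, and the loop $\nabla_1\to\nabla_{-\infty}\to\nabla_1$ twisted by an $S_3$-symmetry of the triangle---compute their monodromies, and check that at least one is a genuine shear while every polygon met stays in the admissible range. Together with the reflection and the swap of the previous step this gives a generating set inside $H_P$, whence $H_P=\mathrm{GL}_2(\bZ)$ and the first assertion follows. The final \emph{in particular} is then immediate: an arbitrary terminal (resp.\ canonical) Mori fiber polygon is first carried to its standard form by such an admissible sequence, after which the standard forms are linked by the constructive proof of \pref{th:polygon}.
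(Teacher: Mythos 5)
Your group-theoretic framework---$H_P$ is a subgroup of $GL(2,\bZ)$, so it suffices to capture a generating set---is sound, and it is in substance the same reduction the paper makes when it observes that any pair of transformed polygons is related by a finite composition of conjugates of the generator moves $S$, $T$, $U$; both arguments then stand or fall with the explicit verification of the generators, and this is exactly where your proposal stops short. For the crucial infinite-order shear you write that ``the work is to exhibit a few concrete admissible loops, compute their monodromies, and check that at least one is a genuine shear while every polygon met stays in the admissible range''---but this computation is never carried out, and it is the entire content of the lemma: it is the analogue of the paper's explicit chain \pref{eq:cremona} and of the formula $S\nabla_m=(S\ell_{m-1}^+S^{-1})\circ\cdots\circ(S\ell_1^+S^{-1})\circ\ell^+\circ\ell_1^-\circ\cdots\circ\ell_{m-1}^-(\nabla_m)$. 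For what it is worth, your first candidate loop does succeed: from $\Conv(\pm e_1)\subsetm\nabla_0$ the link $\ell_0^+$ of \pref{eq:linkm} reaches $\nabla_1$, and the link inserting $e_1+e_2$ and deleting $e_2$, which is the conjugate of $\ell_0^-$ by $\begin{pmatrix}1&-1\\0&-1\end{pmatrix}$, lands on $T\nabla_0=\Conv(\pm e_1,\pm(e_1+e_2))$ while meeting only terminal polygons, so the monodromy is the shear $T$. But asserting that such a loop should exist is not a proof that it does; a blind reader cannot check your argument without doing this verification themselves.

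There is a second, more structural gap: all of your generator placements happen at $\nabla_0$, while the lemma concerns every $\nabla_m$ and $\nabla_{-\infty}$. The stabilizer argument does not extend to $m\ge 1$: the reflection $\mathrm{diag}(1,-1)$ sends the vertex $-me_1-e_2$ of $\nabla_m$ to $-me_1+e_2\notin\nabla_m$, so it does not stabilize $\nabla_m$, and the identification of the swap with the monodromy of $\ell^\pm$ is likewise a statement about $\nabla_0$ only. What is missing is a transfer step $H_{\nabla_0}\subseteq H_{\nabla_m}$, which holds because $\nabla_m$ is joined to $\nabla_0$ by the admissible descent $\ell_0^-\circ\cdots\circ\ell_{m-1}^-$ (all indices $\le\max\lc 1,m-1\rc$) and admissible sequences can be conjugated and concatenated; you never state or use this. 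Relatedly, your reduction of $\nabla_{-\infty}$ to $\nabla_1$ overlooks that the statement allows fewer links for $\nabla_{-\infty}$ (only $\ell^\pm$, $\ell_{-\infty}^\pm$, $\ell_0^\pm$) than for $\nabla_1$ (which also admits $\ell_1^\pm$); since $\ell_1^\pm$ passes through $\Conv(\nabla_1\cup\nabla_2)$, which is canonical but not terminal, an unqualified appeal to $H_{\nabla_1}=GL(2,\bZ)$ would also break the terminal half of the final ``in particular''. These defects are repairable---the loops you name, made explicit as above and combined with the transfer step, complete the argument along the same lines as the paper---but as written the proposal defers precisely the computations that constitute the proof.
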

\begin{proof}
It is enough to show the claim only for 
the pairs of polygons, either 
$\nabla_{-\infty}$ or $\nabla_m$ in standard form \pref{eq:polygons}
and one of those
moved by the action of a generator of $GL(N) \simeq GL(2,\bZ)$, say,
\begin{equation}
   S = \begin{pmatrix}
      0 & -1 \\
      1 & 0
   \end{pmatrix}, \quad
   T = \begin{pmatrix}
      1 & 1 \\
      0 & 1
   \end{pmatrix} \quad \text{or} \quad
   U = \begin{pmatrix}
      -1 & 0 \\
      0 & 1
   \end{pmatrix}.
\end{equation}
Indeed, any other pair of transformed polygons 
is related by a finite composition of some conjugates of the above moves,
and, for such a conjugate move, it can be decomposed into a sequence of 
the corresponding conjugates of the elementary links.
Thus the claim can be verified by direct computation.
For example, the pair $\nabla_{-\infty}$ and $S\nabla_{-\infty}$ is connected by a sequence,
\begin{equation}
   \label{eq:cremona}
    \begin{tikzpicture}[scale=0.5, baseline={(0,0)}]
    \fill[gray] (1,0) -- (0,1) -- (-1,-1) --  cycle;
    \draw[black] (1,0) -- (0,1) -- (-1,-1) --  cycle;
    \fill (0,0) circle (2.0pt); 
    \fill (1,0) circle (2.0pt); 
    \fill (0,1) circle (2.0pt); 
    \fill (-1,-1) circle (2.0pt); 
    \node at (0,-1.5) {$\scriptscriptstyle{\mathrm{Mfp}}$};
    \end{tikzpicture} 
 \,\subsetdot \,
    \begin{tikzpicture}[scale=0.5, baseline={(0,0)}]
    \fill[gray] (1,0) -- (0,1) -- (-1,0) -- (-1,-1) --  cycle;
    \draw[black] (1,0) -- (0,1) -- (-1, 0) --(-1,-1) --  cycle;
    \fill (0,0) circle (2.0pt); 
    \fill (1,0) circle (2.0pt); 
    \fill (0,1) circle (2.0pt); 
    \fill (-1,0) circle (2.0pt); 
    \fill (-1,-1) circle (2.0pt); 
    \end{tikzpicture} 
 \,= \,
    \begin{tikzpicture}[scale=0.5, baseline={(0,0)}]
    \draw[black] (1,0) -- (0,1) -- (-1, 0) --(-1,-1) --  cycle;
    \draw[thick, gray] (0,0) -- (1,0);
    \draw[thick, gray] (0,0) -- (-1,0);
    \fill (0,0) circle (2.0pt); 
    \fill (1,0) circle (2.0pt); 
    \fill (0,1) circle (2.0pt); 
    \fill (-1,0) circle (2.0pt); 
    \fill (-1,-1) circle (2.0pt); 
    \node at (0,-1.5) {$\scriptscriptstyle{\mathrm{Mfp}}$};
    \end{tikzpicture} 
 \,\subsetdot \,
    \begin{tikzpicture}[scale=0.5, baseline={(0,0)}]
    \draw[black] (1,0) -- (0,1) -- (-1, 0) --(-1,-1) -- (0,-1) -- cycle;
    \draw[thick, gray] (0,0) -- (1,0);
    \draw[thick, gray] (0,0) -- (-1,0);
    \fill (0,0) circle (2.0pt); 
    \fill (1,0) circle (2.0pt); 
    \fill (0,1) circle (2.0pt); 
    \fill (0,-1) circle (2.0pt); 
    \fill (-1,0) circle (2.0pt); 
    \fill (-1,-1) circle (2.0pt); 
    \end{tikzpicture} 
 \,\supsetdot \,
    \begin{tikzpicture}[scale=0.5, baseline={(0,0)}]
    \draw[black] (1,0) -- (0,1) -- (-1, 0) -- (0,-1) -- cycle;
    \draw[thick, gray] (0,0) -- (1,0);
    \draw[thick, gray] (0,0) -- (-1,0);
    \fill (0,0) circle (2.0pt); 
    \fill (1,0) circle (2.0pt); 
    \fill (0,1) circle (2.0pt); 
    \fill (0,-1) circle (2.0pt); 
    \fill (-1,0) circle (2.0pt); 
    \node at (0,-1.5) {$\scriptscriptstyle{\mathrm{Mfp}}$};
    \end{tikzpicture} 
 \,\subsetdot \,
    \begin{tikzpicture}[scale=0.5, baseline={(0,0)}]
    \draw[black] (1,0) -- (0,1) -- (-1, 0) -- (0,-1) -- (1,-1) -- cycle;
    \draw[thick, gray] (0,0) -- (1,0);
    \draw[thick, gray] (0,0) -- (-1,0);
    \fill (0,0) circle (2.0pt); 
    \fill (1,0) circle (2.0pt); 
    \fill (0,1) circle (2.0pt); 
    \fill (0,-1) circle (2.0pt); 
    \fill (-1,0) circle (2.0pt); 
    \fill (1,-1) circle (2.0pt); 
    \end{tikzpicture} 
 \,\supsetdot \,
    \begin{tikzpicture}[scale=0.5, baseline={(0,0)}]
    \draw[black] (1,0) -- (0,1) -- (-1, 0) -- (1,-1) -- cycle;
    \draw[thick, gray] (0,0) -- (1,0);
    \draw[thick, gray] (0,0) -- (-1,0);
    \fill (0,0) circle (2.0pt); 
    \fill (1,0) circle (2.0pt); 
    \fill (0,1) circle (2.0pt); 
    \fill (-1,0) circle (2.0pt); 
    \fill (1,-1) circle (2.0pt); 
    \node at (0,-1.5) {$\scriptscriptstyle{\mathrm{Mfp}}$};
    \end{tikzpicture} 
 \,= \,
    \begin{tikzpicture}[scale=0.5, baseline={(0,0)}]
    \fill[gray] (1,0) -- (0,1) -- (-1,0) -- (1,-1) --  cycle;
    \draw[black] (1,0) -- (0,1) -- (-1, 0) -- (1,-1) -- cycle;
    \fill (0,0) circle (2.0pt); 
    \fill (1,0) circle (2.0pt); 
    \fill (0,1) circle (2.0pt); 
    \fill (-1,0) circle (2.0pt); 
    \fill (1,-1) circle (2.0pt); 
    \end{tikzpicture} 
 \,\supsetdot \,
    \begin{tikzpicture}[scale=0.5, baseline={(0,0)}]
    \fill[gray]  (0,1) -- (-1,0) -- (1,-1) --  cycle;
    \draw[black] (0,1) -- (-1, 0) -- (1,-1) -- cycle;
    \fill (0,0) circle (2.0pt); 
    \fill (0,1) circle (2.0pt); 
    \fill (-1,0) circle (2.0pt); 
    \fill (1,-1) circle (2.0pt); 
    \node at (0,-1.5) {$\scriptscriptstyle{\mathrm{Mfp}}$};
    \end{tikzpicture},
\end{equation}
where we encode the fiber structure $\nablaf \subset \nabla$ as 
the thick interval overlaid in gray if $\dim \nablaf =1$ and as 
the polytope filled in gray if $\nablaf = \nabla$.
Also, the marker ``\textrm{Mfp}'' indicates that $\nablaf \subsetm \nabla$ is a Mori fiber polygon.
If we write the conjugate and the composition of elementary links as in ordinary maps, 
the sequence \pref{eq:cremona} is expressed as
$S\nabla = (U l_{-\infty}^- U^{-1})\circ (U l_0^+ U^{-1})\circ l_0^-\circ l_{-\infty}^+(\nabla)$.
Similarly, as another example, the pair $\nabla_m$ and $S\nabla_m$ is connected by a sequence
$S\nabla_m=(Sl_{m-1}^+S^{-1})\circ \cdots \circ (S l_1^+ S^{-1})\circ l^+ \circ l_1^- \circ \cdots \circ l_{m-1}^-(\nabla_m)$.
Remaining four pairs are left to the reader.
\end{proof}

\begin{proof}[{Non-constructive proof of \pref{th:polygon}}]
   Let us set $\bF_{-\infty} = \bP^2$ for convenience.
   For any sequence $s$ of Sarkisov links in \pref{eq:2dimlinks},
   we write $M(s)$ as the maximal $m$ such that $\bF_m$ 
   appears in the sequence $s$, and $M(\bF_m)=m$ for all $m\in \lc -\infty, 0, 1, \dots \rc$.
   \begin{lemma}
      \label{lm:smallseq}
   Let $p: X\rightarrow S$ and $p':X'\rightarrow S'$ be smooth rational Mori fiber surfaces and 
   $\varphi: X \dashrightarrow X'$ be any birational map.
   Then $\varphi$ is decomposed into a sequence $s$ of Sarkisov links 
   such that $M(s) \le \max\lc 1, M(X), M(X')\rc$. 
   \end{lemma}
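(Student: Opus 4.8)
The plan is to start from any factorization of $\varphi$ into Sarkisov links of types I--IV, whose existence is guaranteed by the Sarkisov program for rational surfaces, and then to lower its profile by local surgery. Since the only smooth rational Mori fiber surfaces with $m\ge 2$ are the Hirzebruch surfaces $\bF_m$, and since by the classification \pref{eq:2dimlinks} the only links touching such an $\bF_m$ are the elementary transforms $\el_x$ of type II${}_\mathrm{ni}$, I would run an induction on the pair $(M(s), n_s)$ ordered lexicographically, where $n_s$ counts the links of $s$ at which the value $M(s)$ is attained. If $M(s)\le \max\{1, M(X), M(X')\}$ we are done; otherwise $M:=M(s)\ge 2$ and the sequence has a strict local maximum, forced to have the shape $\bF_{M-1}\xrightarrow{\el_x}\bF_M\xrightarrow{\el_y}\bF_{M-1}$, because an elementary transform changes the index by exactly $\pm 1$ and no other link type is available at $\bF_M$.

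The heart of the matter is to replace this up--down excursion by a detour that stays at height $\le M-1$ and realizes the same birational map, thereby strictly decreasing $(M(s), n_s)$. Here $x$ lies on the negative section $D$ of $\bF_{M-1}$ (so that $\el_x$ raises the index) and $y$ lies off the negative section of $\bF_M$ (so that $\el_y$ lowers it). Writing $E_x\subset\bF_M$ for the fiber created by $\el_x$, I would split into three cases according to the position of $y$. If $y\notin E_x$, the two centres sit over distinct points of the base $\bP^1$, the transforms commute, and the excursion equals a valley $\bF_{M-1}\dashrightarrow\bF_{M-2}\dashrightarrow\bF_{M-1}$ obtained by performing the lowering transform first; this removes the peak outright. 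If $y$ is the point of $E_x$ onto which $\el_x$ contracts the fiber through $x$, the two transforms are mutually inverse and the pair is simply deleted.

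The remaining case, $y$ a general point of $E_x$, is the main obstacle, and the naive local fix fails: the composite self-map $\el_y\circ\el_x$ of $\bF_{M-1}$ has its unique proper base point on the negative section $D$, which is the signature of an index-raising transform, so no reduced factorization can avoid returning to $\bF_M$. The way around this is to allow a non-reduced detour. I would first perform an auxiliary lowering transform $\el_z$ centred at a point $z\ne x$ of the fiber through $x$, descending to $\bF_{M-2}$, and then reconstruct $\el_y\circ\el_x$ from $\bF_{M-2}$ by an index-raising transform (now reaching only $\bF_{M-1}$) together with the steps needed to cancel the auxiliary base point. The extra base point introduced by $\el_z$ makes room to reorder the blow-ups so that the detour never leaves height $M-1$. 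Checking that this reconstruction exists and is an equality of birational maps is the one place demanding a careful calculation on a common resolution; it is in effect the elementary relation of the Sarkisov program attached to the fiber through $x$. Granting it, each surgery strictly decreases $(M(s), n_s)$ without producing any link above the current maximum, so the induction terminates at a sequence of height $\le \max\{1, M(X), M(X')\}$. I would close by noting that the constant $1$ is unavoidable, being already forced by Noether's theorem for $\bP^2$, since realizing a quadratic transformation as a chain of Sarkisov links passes through $\bF_1$.
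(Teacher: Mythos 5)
Your global strategy (Sarkisov decomposition, forced shape of a peak $\bF_{M-1}\xrightarrow{\el_x}\bF_M\xrightarrow{\el_y}\bF_{M-1}$, peak-removal surgery, lexicographic termination) is the same as the paper's, and your first two cases are correct: when $y$ lies off the fiber $E_x$ the two transforms commute and the peak becomes a valley, and when $y$ is the point to which the fiber through $x$ was contracted, the two links are mutually inverse and cancel. The gap is in your third case, exactly at the step you flag as unchecked: your auxiliary centre $z$ is chosen \emph{on} the fiber $F$ through $x$, and with that choice the surgery collapses. Since $z\in F$, the transform $\el_z$ contracts all of $F$ (including $x$) to a point $p$, which lies on the negative section of $\bF_{M-2}$; the indeterminacy of the remaining map $(\el_y\circ\el_x)\circ\el_z^{-1}$ is then the tower of infinitely near points $p\prec x\prec y$, where $x$ has become the direction of the negative section at $p$. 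The unique proper base point is $p$, and the elementary transform centred at $p$ is precisely $\el_z^{-1}$: blowing up $p$ re-creates $F$, while the fiber of $\bF_{M-2}$ through $p$ is the image of $E_z$, which gets contracted back. So the forced continuation of your detour undoes $\el_z$, and you face the original map $\el_y\circ\el_x$ again; no room has been created. The relation you appeal to does not exist: for two centres in the \emph{same} fiber the elementary relation is degenerate (each of the two links dominated by the common blow-up is inverted by the transform centred at the image point, and no reordering identity follows). One can also see this globally: the Hirzebruch surfaces dominated by the resolution of $\el_y\circ\el_x$ enlarged by the blow-up of $z\in F$ form a chain $\bF_{M-2}$--$\bF_{M-1}$--$\bF_M$--$\bF_{M-1}$ in which $\bF_{M-2}$ is a dead-end leaf, so every factorization supported on these surfaces still passes through $\bF_M$.

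The repair is the paper's choice: take $z$ off \emph{both} the negative section \emph{and} the fiber through $x$. Then $\el_z$ is an isomorphism near $F$, elementary transforms centred in distinct fibers do commute, and two applications of this commutation give $\el_y\circ\el_x=\el_z^{-1}\circ\el_y'\circ\el_x'\circ\el_z$, where $\el_x'$ now starts from $\bF_{M-2}$ and hence only reaches $\bF_{M-1}$; the heights along the detour are $M-1$, $M-2$, $M-1$, $M-2$, $M-1$. With this single correction (your mutually-inverse case is even subsumed in the same formula), your induction on $(M(s),n_s)$ goes through verbatim.
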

   \begin{proof}
   By the Sarkisov program in two dimensions,
   any birational map $\varphi: X\dashrightarrow X'$ can be decomposed into 
   a sequence of Sarkisov links. 
   Suppose $M(s) > \max\lc 1,M(X), M(X')\rc$ for a sequence $s$ decomposing $\varphi$.
   Then $s$ should be locally in the following form:
   \begin{equation}
      \label{eq:maxM}
      s:
      \begin{tikzcd}
      \cdots \arrow[r,dashrightarrow] & 
      \bF_{M(s)-1} \arrow[r,dashrightarrow, "\el_x"] &
       \bF_{M(s)} \arrow[r,dashrightarrow, "\el_y"] & \bF_{M(s)-1} \arrow[r,dashrightarrow] &\cdots
      \end{tikzcd},
   \end{equation}
   where $x\in \bF_{M(s)-1}$ is on the curve of negative self-intersection,
   $y\in \bF_{M(s)}$ is away from the curve of negative self-intersection,
   and $\el_x$ and $\el_y$ are elementary transforms \pref{eq:elem}.
   If $y$ is not an infinitely near point to $x$ (i.e, 
   if $y$ does not lie on the strict transform of the exceptional curve of the blow-up of $\bF_{M(s)-1}$ at $x$),
   one can replace $s$ with the following sequence $s'$ by switching the order of taking elementary transforms:
   \begin{equation}
      \label{eq:maxMfinite}
      s':
      \begin{tikzcd}
      \cdots \arrow[r,dashrightarrow] & 
      \bF_{M(s)-1} \arrow[r,dashrightarrow, "\el_y"] &
       \bF_{M(s)-2} \arrow[r,dashrightarrow, "\el_x"] & \bF_{M(s)-1} \arrow[r,dashrightarrow] &\cdots
      \end{tikzcd},
   \end{equation}
   where we denote the corresponding points $x \in \bF_{M(s)-2}, y\in \bF_{M(s)-1}$
   as the same symbols.
   Similarly, if $y$ is an infinitely near point to $x$ (which is only redundant in toric case),
   by taking another elementary transform at $z\in \bF_{M(s)-1}$ in advance, 
   one can replace $s$ with the following sequence $s'$
   (by using elementary relations in the sense of \cite{MR3123306} twice):
   \begin{equation}
      \label{eq:maxMfinite}
      s':
      \begin{tikzcd}[column sep = 20pt]
      \cdots \arrow[r,dashrightarrow] & 
      \bF_{M(s)-1} \arrow[r,dashrightarrow, "\el_z"] &
       \bF_{M(s)-2} \arrow[r,dashrightarrow, "\el_x"] & 
       \bF_{M(s)-1} \arrow[r,dashrightarrow, "\el_y"] & 
       \bF_{M(s)-2} \arrow[r,dashrightarrow, "\el_z^{-1}"] & \bF_{M(s)-1} \arrow[r,dashrightarrow] &\cdots
      \end{tikzcd},
   \end{equation}
   where $z$ is chosen away from neither the curve of negative 
   self-intersection nor the fiber passing through $x$,
   and $\el_z^{-1}$ denotes the inverse of $\el_z$ 
   away from 
   the fiber passing through $x$.
   In either case, the number of times that $\bF_{M(s)}$ appears is strictly decreased in $s'$.
   Thus, after a finite number of replacing, one may have $M(s')<M(s)$.
   Repeating the same process, 
   one obtains a sequence $s$ that satisfies $M(s) \le \max\lc 1, M(X), M(X')\rc$ in the end.
   \end{proof}

   Let $\nabla$ and $\nabla'$ be any pair of reflexive, and hence, canonical polygons.
   By \pref{pr:twodim},
   one may assume that 
   those are canonical Mori fiber polygons by 
   repeating reductions in advance. 
   By \pref{lm:lift},
   for any pair of canonical Mori fiber polygons $\nablaf\subsetm \nabla$ and $\nablaf'\subsetm \nabla'$,
   there is a pair of smooth toric Mori fiber surfaces $p: X\rightarrow S$ and $p': X'\rightarrow S'$
   which respectively represent $\nablaf\subsetm \nabla$ and $\nablaf'\subsetm \nabla'$
   and the natural birational map $\varphi: X\dashrightarrow X'$ 
   (all of which are uniquely determined in two dimensions).
   Note that $X$ and $X'$ are either $\bF_m$ ($m\le 2$) in this case.
   Clearly from the proof, the sequence in \pref{lm:smallseq} 
   can be chosen as a sequence of toric Sarkisov links for 
   the equivariant birational map $\varphi: X \dashrightarrow X'$.
   Therefore, 
   the sequence of Sarkisov links can be coarse-grained into a sequence of elementary links 
   consisting only of Fano polygons, which
   connects $\nablaf\subsetm \nabla$ and $\nablaf'\subsetm \nabla'$ again by \pref{pr:twodim}.
   From the concrete description of elementary links, 
   all the intermediate Fano polygons turn ont to be canonical.
   The same argument also works for terminal Mori fiber polygons. 
   This completes the proof.
\end{proof}

\begin{remark}
   \pref{lm:smallseq} is a variant of the classical Noether--Castelnuovo theorem, which states that
   the Cremona group $\Bir(\bP^2)$ is generated by automorphisms in $\Aut(\bP^2)\simeq PGL(3,\bC)$
   and the standard quadratic transformation $\sigma: \bP^2 \dashrightarrow \bP^2$ written as 
   $[a:b:c]\mapsto [\frac{1}{a}:\frac{1}{b}:\frac{1}{c}]$.
   In fact, 
   for a given birational automorphism of $\bP^2$,
   one can easily rewrite the sequence obtained in \pref{lm:smallseq}
   into the composition of 
   the transformations described as
   \begin{equation}
      \label{eq:quadratic}
      \begin{tikzcd}
       h^{-1} \circ \sigma \circ  g: \bP^2 \arrow[r,dashrightarrow, "\mathrm{bl}_x^{-1}"] & 
       \bF_1 \arrow[r,dashrightarrow, "\el_y"] & 
       \bF_0 \arrow[r,dashrightarrow, "\el_z"] & \bF_1 \arrow[r, rightarrow] & \bP^2
      \end{tikzcd}
   \end{equation}
   by inserting redundant Sarkisov links.
   Here in \pref{eq:quadratic}, $x,y,z \in \bP^2$ are any three points not on a line,
   $g\in \Aut(\bP^2)$ is an automorphism which moves $x, y, z$ to the torus fixed points, and
   $h\in \Aut(\bP^2)$ is defined to produce $\sigma$
   by fixing torus action depending on the choice of $g$.
\end{remark}

To clarify the relationship between \pref{lm:smallseq} and Reid's fantasy, 
we introduce the notion of geometric transitions for Calabi--Yau elephants in arbitrary dimensions.
\begin{definition}
   Let $X$ and $X'$ be normal projective varieties 
   whose general elephants $D$ and $D'$ are Calabi--Yau varieties,
   and $\varphi: X \rightarrow X'$ be a birational contraction.
   We say that the Calabi--Yau varieties $D$ and $D'$ are related by a 
   \emph{geometric transition associated with $\varphi$}
   if $\varphi|_D: D \rightarrow \varphi_* D$ is a birational contraction
   to a normal variety,
   and $\varphi_*D$ and $D'$ are related by a flat deformation inside $X'$.
   The opposite operation is also called a geometric transition associated with $\varphi^{-1}$.
   Furthermore, 
   if $\varphi|_D$ is an isomorphism, and $\varphi_*D$ and $D'$ are related by a smooth deformation inside $X'$,
   we say that $D$ and $D'$ are related by a 
   \emph{smooth transition}.
\end{definition}
\begin{corollary}
   \label{cr:ellip}
   Let $X$ and $X'$ be smooth projective rational surfaces 
   whose general elephants $E$ and $E'$ are smooth elliptic curves.
   Then any birational map $\varphi: X\dashrightarrow X'$
   is decomposed into a sequence of blow-ups and blow-downs 
   such that  $E$ and $E'$ are connected 
   via a sequence of smooth transitions associated with them.
\end{corollary}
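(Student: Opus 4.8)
The plan is to combine \pref{lm:smallseq} with two elementary observations about how a smooth elliptic elephant behaves under a single blow-up or blow-down. First I would reduce to the Mori fiber case: since $X$ carries a smooth member $E\in\abs{-K_X}$, every $(-1)$-curve $C$ on $X$ satisfies $C\cdot E = C\cdot(-K_X)=1$, so $C$ meets $E$ transversally at one point, and contracting $C$ carries $E$ isomorphically onto a smooth anticanonical curve of the image. Running this process down to a minimal model therefore produces a minimal rational surface still carrying a smooth elliptic elephant, which forces $\bP^2$ or $\bF_m$ with $m\le 2$ (for $m\ge 3$ the negative section lies in the base locus of $\abs{-K}$ and the general anticanonical curve is reducible). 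Applying the same reduction to $X'$, read backwards as a sequence of blow-ups at points of the elephant, lets me replace $\varphi$ by the induced birational map $\psi\colon X_{\min}\dashrightarrow X'_{\min}$ between smooth rational Mori fiber surfaces with $M(X_{\min}),M(X'_{\min})\le 2$.

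Next I would invoke \pref{lm:smallseq} to decompose $\psi$ into a sequence $s$ of the Sarkisov links of \pref{eq:2dimlinks} with $M(s)\le\max\lc 1,M(X_{\min}),M(X'_{\min})\rc\le 2$, and realize each link by blow-ups and blow-downs: the links of type I/III are the blow-up and its contraction $\bP^2\leftrightarrow\bF_1$, those of type II are the elementary transforms $\el_x$ of \pref{eq:elem} (one blow-up followed by the contraction of the strict transform of a fiber), and type IV is an isomorphism. For each such step the two local facts apply: a blow-up centered at a point $x$ lying on a smooth $E\in\abs{-K}$ carries $E$ isomorphically onto its strict transform, which is again anticanonical since $-K_{\tilde X}=\pi^*(-K)-E_{\mathrm{exc}}$, while the contraction of any $(-1)$-curve is a smooth transition by the intersection computation above. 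Hence each link is realized by smooth transitions as soon as its blow-up center lies on a smooth anticanonical member.

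The decisive point is that the bound $M(s)\le 2$ guarantees exactly this. On $\bP^2$, $\bF_0$ and $\bF_1$ the anticanonical system is very ample, so a smooth member passes through any prescribed point; on $\bF_2$ a smooth anticanonical curve exists through every point off the negative section $C_0$, and the only centers that would fail to lie on such a curve are the points of $C_0$, whose blow-up produces $\bF_3$ and is therefore excluded by $M(s)\le 2$. Thus every blow-up occurring in $s$ is centered at a point through which a smooth elliptic anticanonical curve passes, and I can transport the elephant through the whole sequence as strict transforms. The matching of these transported curves with the prescribed general elephants $E$ and $E'$, and the reconciliation of consecutive links whose centers need not lie on a common member, is absorbed into the smooth-deformation clause of the definition: after each contraction I am free to deform $\varphi_*\tilde E$ to any smooth member of $\abs{-K}$, in particular to one through the next center and, at the two ends, to the given general elephants.

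I expect the main obstacle to be precisely the verification that the centers dictated by the fixed map $\psi$ are \emph{good} points, i.e.\ that no step is forced to blow up a point of the negative section of an $\bF_2$; this is controlled entirely by the inequality $M(s)\le 2$ of \pref{lm:smallseq}, which is why that lemma---rather than a naive common resolution of $\varphi$, whose exceptional centers may fail to lie on any smooth anticanonical curve---is the right tool. Once the good-center property is secured, the remaining bookkeeping (checking that each of the four link types in \pref{eq:2dimlinks} is realized by smooth transitions and that the endpoints recover $E$ and $E'$) is routine.
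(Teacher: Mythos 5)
Your proposal is correct and follows essentially the same route as the paper: reduce to Mori fiber surfaces by contracting $(-1)$-curves (using $C\cdot E=-C\cdot K_X=1$, so the elephant maps isomorphically), invoke \pref{lm:smallseq} to get a sequence of Sarkisov links with $M(s)\le 2$, and transport the elephant through each blow-up and blow-down by strict transforms combined with smooth deformations inside the ambient surface. Your explicit check that the bound $M(s)\le 2$ forces every blow-up center to lie on a smooth anticanonical member (very ampleness of $-K$ on $\bP^2$, $\bF_0$, $\bF_1$, and on $\bF_2$ the existence of smooth members through any point off the negative section) simply spells out what the paper compresses into the remark that all intermediate models correspond to reflexive polygons.
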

\begin{proof}
   Let $X$ be a smooth projective rational surface whose general elephant $E$ is a smooth elliptic curve.
   Then any $(-1)$-curve $C$ on $X$ intersects with $E$ at one point by the adjunction formula $C.(C+K)=-2$.
   Hence, a contraction of $(-1)$-curves sends $E$ to its isomorphic image
   without changing the property of being an elephant.
   By contracting all $(-1)$-curves in advance,
   one may assume that $X$ and $X'$ are smooth rational Mori fiber surfaces 
   whose elephants are elliptic curves, i.e., $\bF_m$ ($m\le 2$).
   Now, the birational map $\varphi: X\dashrightarrow X'$ is decomposed into a sequence of 
   Sarkisov links obtained by \pref{lm:smallseq}.
   In each Sarkisov link in the sequence, 
   all birational models allow elliptic elephants since they are described by reflexive polygons in
   its toric description. 
   Thus, the elliptic elephants are related via a smooth transition at each step of the Sarkisov links.
   More precisely, for a blow-up at a point, 
   one can smoothly specialize $E$ to another elephant passing through the center of the blow-up
   and send it to its isomorphic strict transform that is again an elephant in a new ambient space.
\end{proof}
\begin{remark}
   \pref{cr:ellip} is compared with the Sarkisov program for Mori fibered 
   Calabi--Yau pairs established in arbitrary dimensions by \cite{MR3504536},
   which shows,
   in the setting of \pref{cr:ellip},
   $E$ and $E'$ are connected only by a sequence of strict transforms if
   $\varphi: (X, E) \dashrightarrow (X',E)$ is a volume-preserving birational map of Calabi--Yau pairs, i.e., 
   $p^*(K_X + E)= {q}^*(K_{X'}+E')$ for a common log resolution $(p,q):W\rightarrow X\times X'$.
\end{remark}
\bibliographystyle{amsalpha}
\bibliography{reflexive}
\end{document}